\documentclass[12pt]{amsart}
\usepackage{amssymb,enumerate,mathrsfs, amsmath, amsthm}
\usepackage{url,titletoc,enumitem}
\usepackage{amsmath,amsthm,amscd,amssymb,latexsym,verbatim,cite}

\usepackage[usenames,dvipsnames]{xcolor}
\definecolor{darkblue}{rgb}{0.0, 0.0, 0.55}
\usepackage[pagebackref,colorlinks,linkcolor=BrickRed,citecolor=OliveGreen,urlcolor=darkblue,hypertexnames=true]{hyperref}
\usepackage{cmap}

\renewcommand{\subset}{\subseteq}
\renewcommand{\emptyset}{\varnothing}

\linespread{1.18}
\textwidth = 6.5 in 
\textheight = 8.5 in 
\oddsidemargin = 0.0 in 
\evensidemargin = 0.0 in
\topmargin = 0.0 in
\headheight = 0.0 in
\headsep = 0.3 in
\parskip = 0.05 in
\parindent = 0.3 in

\newtheorem{theorem}{Theorem}[section]
\newtheorem{cor}[theorem]{Corollary}
\newtheorem{lemma}[theorem]{Lemma}

\newtheorem*{lemma*}{Lemma}

\def\beq{\begin{equation}}
\def\eeq{\end{equation}}
\def\lb{\label}

\numberwithin{equation}{section}


\def\beq{\begin{equation}}
\def\eeq{\end{equation}}



\def\bbR{ {\mathbb R}}
\def\bbN{ {\mathbb N}}

\def\bbD{ {\mathbb D}}
\def\bbC{ {\mathbb C}}
\def\bbZ{ {\mathbb Z}}
\def\calT{ {\mathcal T}}
\def\calS{ {\mathcal S}}
\def\calR{ {\mathcal R}}
\def\calI{ {\mathcal I}}
\def\calJ{ {\mathcal J}}
\def\tht{\theta}
\def\tbet{\tilde\beta}

\def\eps{\epsilon}





\def\beq{\begin{equation}}
\def\eeq{\end{equation}}

\def\sgn{{\rm sgn}}


%
%
%
%
%
%
%






\title{
 Euler Equations on General Planar Domains}
\author{Zonglin Han and Andrej Zlato\v s}
\address{\noindent Department of Mathematics \\ UC San Diego \\ La Jolla, CA 92093, USA \newline Email:
zoh003@ucsd.edu, zlatos@ucsd.edu}

\begin{document}

\begin{abstract} 
We obtain a general sufficient condition on the geometry of possibly singular planar domains that guarantees global uniqueness for any weak solution to the Euler equations on them whose vorticity is bounded and initially constant near the boundary.  This condition is only slightly more restrictive than exclusion of  corners with angles greater than $\pi$ and, in particular, is satisfied by all convex domains.  The main ingredient in our approach is showing that constancy of the vorticity near the boundary is preserved for all time because Euler particle trajectories on these domains, even for general bounded solutions, cannot reach the boundary in finite time.  We then use this to show that no vorticity can be created by the boundary of such possibly singular domains for general bounded solutions.  We also show that our condition is essentially sharp in this sense by constructing domains that come arbitrarily close to satisfying it, and on which particle trajectories can reach the boundary in finite time.  In addition, when the condition is satisfied, we find sharp bounds on the asymptotic rate of the fastest possible approach of particle trajectories to the boundary.\end{abstract}

	\maketitle

\section{Introduction}

The study of motions of incompressible inviscid fluids, in mathematics, physics, as well as engineering,  is both a centuries old endeavor and a vibrant area of current research.  Mathematically, these motions are modeled by the Euler equations
\begin{align} 
 \partial_t u + (u\cdot \nabla) \,u &= -\nabla p,  \label{1.0a}
 \\  \nabla \cdot u & = 0,  \label{1.0b}
\end{align}
with $u$ the fluid velocity and $p$ its pressure.   These PDE are usually considered for times $t>0$ and on spatial domains $\Omega\subseteq\bbR^d$ with impermeable boundaries and hence with the {\it no-flow} (or {\it slip}) boundary condition 
 \begin{equation} \label{1.0}
u\cdot n = 0
 \end{equation}
on $\bbR^+\times\partial\Omega$, with $n$ the unit outer normal to $\Omega$.  
Despite the immense variety of advances in the area since Euler's formulation of this simple looking but  incredibly rich system of PDE in 1755, some of the most important questions about its solutions remain open to this day.  While the most famous of these is the question of finite time singularity of solutions in three and more dimensions, even in two spatial dimensions there are several important unsolved problems.  One of these is uniqueness of solutions on irregular domains --- even just general convex ones --- due to singular effects of rough boundaries on the dynamics of fluids.  
 
In two dimensions, the case considered here, the Euler equations can be equivalently reformulated as the active scalar equation
\begin{equation}\label{1.1}
\partial_t  \omega + u\cdot \nabla \omega = 0 
\end{equation}
on $\bbR^+\times\Omega\subseteq \bbR^+\times\bbR^2$, with
\[
 \omega:=\nabla\times u= \partial_{x_1} u_{2} - \partial_{x_2} u_{1}
 \]
 the {\it vorticity} of the flow.  This conveniently removes the pressure from the system, and one can now also find the (divergence-free) velocity from the vorticity via the Biot-Savart law
\begin{equation}\label{1.2}
u=\nabla^\perp\Delta^{-1} \omega,
\end{equation}
with $\Delta$ the Dirichlet Laplacian on $\Omega$ and $\nabla^\perp\psi:=(-\partial_{x_2} \psi, \partial_{x_1} \psi)$.

\medskip
\noindent {\bf Prior Existence and Uniqueness Results}
\medskip

On smooth bounded domains $\Omega\subseteq\bbR^2$, global well-posedness for strong solutions goes back to the breakthrough 1933 papers by Wolibner \cite{Wol} and H\" older \cite{Hold}  (for unbounded domains, see \cite{McGrath,Kikuchi}).  A natural class of solutions to consider are those with bounded vorticities, due to \eqref{1.1} preserving $\|\omega(t,\cdot)\|_{L^\infty}$, and global well-posedness for weak solutions with initial conditions $\omega_0 \in L^\infty(\Omega)$ was proved in the celebrated work of Yudovich \cite{Yudth} (see also \cite{Bardos,MB,MP, Temam}).  
While existence of global weak solutions can also be proved for $\omega_0 \in L^p(\Omega)$ \cite{DiPernaMajda} and even for $\omega_0 \in H^{-1}(\Omega)\cap \mathcal M_+(\Omega)$ \cite{Delort}, uniqueness appears likely to not always hold in this case.  Indeed, this is suggested by recent results of Vishik  \cite{Vis1,Vis2}, who showed non-uniqueness of solutions on $\mathbb R^2$ with $\omega_0  \in L^p(\mathbb R^2)$ for some $p>2$, in the presence of a forcing from the same space.

The above results apply on sufficiently smooth domains, with  $\partial\Omega$ being $C^{1,1}$ or better.  However, global existence of (even unbounded) solutions has been proved to hold on much less regular domains.    Indeed, this was done via $L^2$ estimates on the velocity $u$ for $\omega_0 \in L^p(\Omega)$ or $\omega_0 \in H^{-1}(\Omega)\cap \mathcal M_+(\Omega)$ by Taylor on convex domains \cite{Taylor}, and later by G\' erard-Varet and Lacave for very general irregular domains \cite{GV-Lac, GV-Lac2}.

Low regularity of the boundary is, however, currently a crucial barrier to a resolution of the uniqueness of solutions question on general bounded domains, even for bounded solutions.  In a nutshell, all presently available uniqueness results require the velocity to be close to Lipschitz in an appropriate sense, and
sufficient smoothness of $\partial\Omega$ is typically needed to obtain apriori estimates on the Riesz transform $\nabla\nabla^\perp\Delta^{-1}\omega=\nabla u$.  This includes the approach of Yudovich, via the family of Calder\' on-Zygmund inequalities
\[
\|\nabla u(t,\cdot)\|_{L^p} \le Cp\|\omega(t,\cdot)\|_{L^p}
\]
for all $p\in[2,\infty)$ and with a uniform $C$, as well as the use of the {\it log-Lipschitz} estimate
\beq \lb{1.4}
\sup_{x,y\in\Omega} \frac {|u(t,x)-u(t,y)|}{|x-y| \max \{1,-\ln|x-y|\} }  \le C \|\omega(t,\cdot)\|_{L^\infty} 
\eeq
(see, e.g.,  \cite{MP}).  However, such estimates do not hold in general on less regular domains.  For instance, $\nabla u$ may only be $L^2$ near irregular portions of the boundaries
 of general convex domains (even for smooth $\omega$),
 while Jerison and Kenig showed that $\nabla u$ may not even be integrable on some $C^1$ domains \cite{JK}.  

It is therefore not surprising that uniqueness of all weak solutions has so far only been established for a fairly small class of (simply connected) non-$C^{1,1}$ domains.  In fact, all these must be $C^{1,1}$ except at finitely many corners with acute (including right) angles.  Specifically, this was achieved first for rectangles by Bardos, Di Plinio, and Temam \cite{BDT}, then for domains that are $C^{2,\gamma}$ (for some $\gamma>0$) except at finitely many acute corners by Lacave, Miot, and Wang \cite{LMW}, and then on domains that are $C^{1,1}$ except at finitely many acute corners by Di Plinio and Temam \cite{DT}.  In all these results, intersections of the domains with small discs centered at all corners were even assumed to be exact sectors. 
Corners with angles greater than $\frac\pi 2$ (and all other irregular geometries of $\partial\Omega$) are excluded in these results due to the velocity not being close to Lipschitz there even for smooth $\omega$ (at corners with angles greater than $\pi$, the velocity is in general even unbounded).  Uniqueness of general solutions outside of the class of domains from \cite{DT} therefore appears to be a very challenging open problem.

Nevertheless, 
one may still hope to establish uniqueness on irregular domains for solutions that remain constant in the regions where the velocity fails to be close to Lipschitz (similarly to results for the vortex-wave system \cite{LacMio,MarPul-VWS}, when the diffuse part of the vorticity remains constant near all point vortices).  This may mean neighborhoods of corners with angles greater than $\frac\pi 2$ for piecewise $C^{1,1}$ domains, or all of $\partial\Omega$ for general irregular domains.  In fact, since  Euler particle trajectories for bounded solutions starting inside {\it smooth} domains $\Omega$ cannot approach $\partial\Omega$ faster than double-exponentially in time, all solutions that are initially constant near all of $\partial\Omega$ will remain such for all later times.  One may hope that this property extends to many less regular domains, possibly with other asymptotic rates of approach to the boundary, which would yield  a large class of initial data on such domains with  unique global weak solutions.  

This approach was recently taken up by Lacave and the second author.  Lacave first proved in \cite{Lacave-SIAM} that if $\partial\Omega$ is $C^{1,1}$ except at finitely many corners that are all exact sectors with angles greater than $\frac\pi 2$, and $\omega_0 $ has a constant sign and is constant near $\partial\Omega$, then $\omega$ will indeed remain constant near $\partial\Omega$ forever and weak solutions are unique.  Then, together with Zlato\v s, they showed the same result when $\partial\Omega$ is $C^{1,1}$ except at finitely many corners of arbitrary angles from $(0,\pi)$ that do not need to be exact sectors, and without the  sign restriction on $\omega_0 $  \cite{LacZla}.  In both works, Euler particle trajectories for bounded solutions (general ones in \cite{LacZla} and with a constant sign in \cite{Lacave-SIAM}) were shown to remain in $\Omega$ for all time (again approaching $\partial\Omega$ no faster than double-exponentially), and   in \cite{LacZla} this was even proved to hold when $\partial\Omega$ is only $C^{1,\gamma}$ (for some $\gamma>0$) except at finitely many corners with angles from $(0,\pi)$.  Moreover, \cite{LacZla} also constructed examples of domains smooth everywhere except at a single corner with an arbitrary angle from $(\pi,2\pi)$ where Euler particle trajectories can reach $\partial\Omega$ in finite time, using an idea of Kiselev and Zlato\v s \cite{KisZla}.

\medskip
\noindent {\bf Definitions and Main Results}
\medskip

The uniqueness results in \cite{Lacave-SIAM,LacZla}, just as those in \cite{BDT,DT,LMW}, still require piecewise $C^{1,1}$ domains.  In the present paper we greatly expand this class by considering general {\it regulated} bounded Lipschitz domains, that is, those having a (counter-clockwise) {\it forward} tangent vector at each point of $\partial\Omega$ (see \eqref{1.7a} below), whose argument is a function with left and right limits everywhere. In particular, this includes all convex domains.  

We then obtain a general condition guaranteeing that Euler particle trajectories for bounded weak solutions in these domains never reach $\partial\Omega$, and also prove existence and uniqueness of global  weak solutions for all vorticities initially constant near $\partial\Omega$.  Our condition is only slightly more restrictive than exclusion of corners with angles greater than $\pi$, which was shown to be necessary in \cite{LacZla}, and it places no restrictions on those segments of $\partial\Omega$ where the argument of the forward tangent vector is non-decreasing (so, in particular, it is satisfied by all convex $\Omega$).  Specifically, our condition is satisfied precisely when the  argument of the forward tangent vector to $\partial\Omega$, composed with the Riemann mapping for $\Omega$, can be written as a sum of an arbitrary increasing function and a second function that has a modulus of continuity $m$ from  a precisely defined class  of moduli (which includes, e.g., $m$ with  $m(r)=\frac \pi{2|\log r|}$ for all small enough $r>0$).   Moreover, for any concave modulus $m$ from this class, we find the exact (up to a constant factor in time) asymptotic rate of the fastest possible approach of Euler particle trajectories to $\partial\Omega$ among all domains as above.  We also show that no vorticity can be created by the boundary of these possibly singular domains, a result that even extends in a weaker form to general bounded domains (see Corollary \ref{C.2.2}). 

Finally, we  show that our condition is essentially sharp.  Specifically, for each concave modulus not in the above class of moduli (e.g., $m$ with $m(r)=\frac a{2|\log r|}$ for all small enough $r>0$, with any fixed $a>\pi$), we construct a domain as above in which particle trajectories can reach the boundary in finite time.    It therefore appears that our work pushes right up to the limits of the philosophy from \cite{LacMio,Lacave-SIAM,LacZla,MarPul-VWS}, within the class of regulated domains at least, and further significant advances will likely require a breakthrough in  the question of uniqueness for solutions that are not constant near all those singular segments of $\partial\Omega$ where the Euler velocities corresponding to bounded vorticities may be far from Lipschitz.  Our Theorem \ref{T.1.1}(ii) and Corollary \ref{C.2.2} below represent a first step in this effort.

Let us now state the precise definitions and our main results.  Let $\Omega\subseteq\bbR^2$ be an open bounded Lipschitz domain with $\partial \Omega$ a Jordan curve, and let $\mathcal{T}:\Omega\to \mathbb{D}$ be a Riemann mapping (with $\bbD$ the unit disc in $\bbC=\bbR^2$).  By the  Kellogg-Warschawski Theorem (see, e.g., \cite[Theorem 3.6]{Pomm}), we can then extend $\calT$ continuously to $\bar\Omega$.  We also let $\calS:=\calT^{-1}$.  
We will consider here solutions to the Euler equations on $\Omega$ from the {\it Yudovich class}
\[
\left\{ (\omega,u)\in L^\infty \left( (0,\infty);L^\infty(\Omega)\times L^2(\Omega) \right) \big|\,\, \text{$\omega= \nabla \times u$, and \eqref{1.0b}--\eqref{1.0} hold weakly} \right\},
\]
where the weak form of \eqref{1.0b}--\eqref{1.0} is 
\[
\int_\Omega u(t,\cdot)\cdot\nabla h \, dx = 0 
\qquad \forall h\in H^1_{\rm loc}(\Omega) \text{ with $\nabla h\in L^2(\Omega)$}
\]
for almost all $t>0$ (see \cite{GV-Lac, GV-Lac2}).
Such $\omega$ and $u$ are then equivalently related by the Biot-Savart law \eqref{1.2}.  This can be expressed in terms of $\calT$ and the Dirichlet Green's function $G_\bbD(\xi,z)=\frac 1{2\pi}\ln\frac{|\xi-z|}{|\xi-z^*| |z|}$ for $\bbD$ (with $z^* := {z}{|z|^{-2}}$ and $(a,b)^\perp:=(-b,a)$) as
\beq\label{1.111}
 u(t,x) = \frac1{2\pi}D\mathcal{T}(x)^T\int_{\Omega} \left( \frac{ \mathcal{T}(x)- \mathcal{T}(y)}{| \mathcal{T}(x)- \mathcal{T}(y)|^2} - \frac{ \mathcal{T}(x)- \mathcal{T}(y)^*}{| \mathcal{T}(x)- \mathcal{T}(y)^*|^2 } \right)^\perp \omega(t,y)\, dy.
\eeq
Since $u$ is uniquely determined by $\omega$, we will simply say that $\omega$ is from the Yudovich class.

We say that $\omega$ from the Yudovich class is a {\it weak solution} to the Euler equations on $\Omega$, on time interval $(0,T)$ and  with initial condition $\omega_0\in L^\infty(\Omega)$, if
\beq\label{1.222}
\int_0^{T} \int_\Omega \omega \left(  \partial_t \varphi + u \cdot \nabla \varphi \right) \,dx dt= -\int_\Omega \omega_0 \varphi(0, \cdot) \,dx \qquad \forall \, \varphi \in C_0^\infty \left([0, T) \times \Omega\right).
\eeq
This is obviously the definition of weak solutions to the transport equation \eqref{1.1}, but it is also equivalent to the relevant weak velocity formulation of the Euler equations on $\Omega$ (see \cite[Remark 1.2]{GV-Lac2}).  When $T=\infty$ we call such solutions {\it global}.  Their existence is guaranteed by \cite{GV-Lac} for very general $\Omega$, but the question of uniqueness is still open in general.

It is well known (see, e.g., \cite[Chapter 2]{MP}) that uniform boundedness of $\omega$ shows that the velocity is locally log-Lipshitz, uniformly in time.  Specifically, \eqref{1.4} holds for all $t\in(0,\infty)$ with $\Omega$ replaced by any compact $K\subseteq\Omega$ and with $C=C_{\Omega,K}$.  Then $u$ is also uniformly-in-time locally bounded on $\Omega$, and for each $x\in\Omega$ there is a unique solution to the ODE
\beq \label{1.333}
\frac d{dt} X_t^x = u(t,X_t^x) \qquad\text{and}  \qquad X_0^x=x
\eeq
on an interval $(0,t_x)$ such that 
\[
t_x:=\sup\{t>0\,|\,X_s^x\in\Omega \text{ for all $s\in(0,t)$}\}
\]
 (so if $X_t^x$ reaches $\partial\Omega$, then $t_x$ is the first such time).  That is, $\{X_t^x\}_{t\in[0,t_x)}$ is the Euler particle trajectory for the particle starting at $x\in\Omega$.  We note that a priori the ODE only holds for almost all $t\in(0,t_x)$ (with $X_t^x$ continuous in time), but we will show that $u$ is continuous and therefore \eqref{1.333} holds for all $t\in[0,t_x)$
 (see Corollary \ref{C.2.2} below).

For any $\tht\in\bbR$, the {\it unit forward tangent vector} to $\Omega$ at $\calS(e^{i\tht})\in\partial\Omega$ is the unit vector 
\beq\lb{1.7a}
\bar\nu_\calT(\tht) := \lim_{\phi \rightarrow \tht+} \frac{\mathcal{S}(e^{i\phi})-\mathcal{S}(e^{i\tht})}  {|\mathcal{S}(e^{i\phi})-\mathcal{S}(e^{i\tht})|} ,
\eeq
provided this limit exists.   If it does for each $\tht\in\bbR$, and the limits $ \lim_{\phi \to \tht\pm} \bar\nu_\calT(\phi) $ both exists at each $\tht\in\bbR$, then the domain $\Omega$ is said to be {\it regulated}.  In this case obviously $\lim_{\phi \rightarrow \tht+} \bar\nu_\calT(\phi)=\bar\nu_\calT(\tht)$, while the argument of the complex number $ \bar\nu_\calT(\tht) \left[ \lim_{\phi \rightarrow \tht-} \bar\nu_\calT(\phi) \right]^{-1}$ equals $\pi$ minus the interior angle of $\Omega$ at $\calS(e^{i\tht})$.  We then let
\beq\lb{1.7b}
\bar \beta_\calT(\tht) := \bar {\rm arg}\, \bar\nu_\calT(\tht),
\eeq
where $\bar {\rm arg}$ is the argument of a complex number plus some integer multiple of $2\pi$. This multiple is chosen so that $\bar\beta_\calT(0)\in[0,2\pi)$ and  $\bar\beta_\calT(\tht)-\lim_{\phi \to \tht-}\bar\beta_\calT(\phi)\in[-\pi,\pi]$ for each $\tht\in\bbR$, and if $\Omega$ has cusps, we do it so that this difference is $\pi$ at exterior cusps (with interior angle 0) and $-\pi$ at interior cusps (with interior angle $2\pi$).  Of course, then this difference is again $\pi$ minus the interior angle of $\Omega$ at $\calS(e^{i\tht})$.  Since we only consider Lipschitz domains here (i.e., without cusps), we will  always have  $\bar\beta_\calT(\tht)-\lim_{\phi \to \tht-}\bar\beta_\calT(\phi)\in(-\pi,\pi)$.

The above defines the right-continuous function $\bar\beta_\calT:\bbR\to\bbR$ uniquely, and it satisfies $\bar\beta_\calT(\tht + 2\pi) = \bar\beta_\calT(\tht) + 2\pi$  for all $\tht \in \mathbb{R}$.   As we wrote above, whether Euler particle trajectories for bounded solutions can reach the boundary in finite time depends on how quickly is  $\bar\beta_\calT$ allowed to decrease locally (which happens when $\bar\nu_\calT$ turns clockwise), with no restrictions on its increase. 
This will be quantified in terms of a modulus of continuity for one of two components of  $\bar\beta_\calT$, with the other component being an arbitrary increasing function.

We call a function $m:[0,2\pi]\to[0,\infty)$ with $m(0)=0$  a {\it modulus} if it is continuous, non-decreasing, and satisfies $m(a+b)\le m(a)+m(b)$ for any $a,b\in[0,2\pi]$ with $a+b\le 2\pi$.  If some $f:\bbR\to\bbR$ satisfies $|f(\tht )-f(\phi)|\le m(r)$ for all $r\in[0,2\pi]$ and all $\tht ,\phi\in\bbR$ with $|\tht -\phi|\le r$, we say that $f$ has {\it modulus of continuity} $m$.  We also let 
\[
q_m(s):=s\exp \left( \frac 2\pi \int_{s}^1 \frac{m(r)}r dr \right),
\]
and if $\int_0^1 \frac {ds}{q_m(s)}=\infty$, we let $\rho_m:\bbR\to (0,1)$ be the inverse function to $y\mapsto  \ln \int_{y}^{1} \frac {ds}{ q_m(s)}$, so
\[
\rho_m \left( \ln \int_{y}^{1} \frac {ds}{ q_m(s)} \right) =y .
\]
Then $\rho_m$ is decreasing with $\lim_{t\to-\infty} \rho_m(t)=1$ and $\lim_{t\to\infty} \rho_m(t)=0$, and we shall see that it is the maximal asymptotic approach rate of Euler particle trajectories to $\partial\Omega$ (up to a constant factor in time) among all domains for which the first component of  $\bar\beta_\calT$ from the preceding paragraph has
modulus of continuity $m$.  
Note also that $\int_0^1 \frac {ds}{q_m(s)}=\infty$ holds whenever $\int_0^1 \frac{m(r)}r dr<\infty$, and functions with such moduli $m$ are called {\it Dini continuous}.

In our main results, we will assume the following hypothesis. 

\begin{itemize}
\item[{\rm\bf(H)}] 
Let $\Omega\subseteq\bbR^2$ be a regulated open bounded  Lipschitz domain with $\partial \Omega$ a Jordan curve.
Let $\mathcal{T}:\Omega\to \mathbb{D}$ be a Riemann mapping and let $\beta_\calT,\tilde\beta_\calT$ be functions on $\bbR$ with $2\pi$-periodic (distributional) derivatives 
such that $\beta_\calT$ is non-decreasing, $\tilde \beta_\calT$ has some modulus of continuity $m$ with $q_m$ and $\rho_m$ defined above, and  the argument of the (counter-clockwise) forward tangent vector to $\partial\Omega$ is $\bar\beta_\calT=\beta_\calT+\tilde\beta_\calT$.
\end{itemize}

Note that if $\beta_\calT,\tilde\beta_\calT$ are as above and their sum is the argument of the forward tangent vector to a Jordan curve $\partial\Omega$, then the bounded domain $\Omega$ must automatically be regulated.

As mentioned above, neither {\bf (H)} nor our results place any restrictions on $\beta_\calT$.  In particular, the following main result of the present paper holds for any convex domain $\Omega$, since then one can let $\beta_\calT:=\bar \beta_\calT$ and $\tilde\beta_\calT\equiv 0$ (and therefore  $m\equiv 0$).

\begin{theorem} \lb{T.1.1}
Assume {\bf (H)} and that $\int_0^1 \frac {ds}{q_m(s)}=\infty$.  Let $\omega_0 \in L^\infty(\Omega)$ and let $\omega$ from the Yudovich class be any global weak solution to the Euler equations on $\Omega$ with initial condition $\omega_0$
(such solutions are known to exist by \cite{GV-Lac}).

(i)
We have $t_x=\infty$ for all $x\in\Omega$, and for any $R<1$ and all large enough $t> 0$, 
\beq \lb{1.5}
\sup_{|\calT(x)|\le R} |\calT(X_t^x)|\le 1-\rho_m(500 \|\omega\|_{L^\infty} t)
\eeq
(except when $\omega\equiv 0$, but then $X_t^x\equiv x$). And if  $\tilde\beta_\calT$ is  Dini continuous,
then the right-hand side of \eqref{1.5} can be replaced by the $m$-independent bound $1- \exp({-e^{500 \|\omega\|_{L^\infty} t}})$.

(ii)
We have $\{X_t^x\,|\,x\in\Omega\}=\Omega$ for all $t>0$, and  $\omega(t,X_t^x)=\omega_0(x)$ for \hbox{a.e.~$(t,x)\in \bbR^+\!\times\Omega$.}  Moreover,  $u$ is continuous on $[0,\infty) \times\Omega$ and \eqref{1.333} holds for all $(t,x)\in [0,\infty)\times\Omega$. 

(iii)
If ${\rm supp}\,( \omega_0-a)\cap\partial\Omega = \emptyset$ for some $a\in\bbR$, then the solution $\omega$  is unique.
\end{theorem}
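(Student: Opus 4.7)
The core of the argument is part (i); parts (ii) and (iii) will follow. For (i), the plan is to work in the unit disk via the Riemann map $\calT$ and track $y_t := 1 - |\calT(X_t^x)|$. Differentiating and using \eqref{1.111} expresses $\dot y_t$ as an integral over $\Omega$ of the radial (with respect to $\bbD$) component of a Biot-Savart-type kernel against $\omega(t,\cdot)$. The target is a differential inequality of the form
\[
|\dot y_t| \le C \|\omega\|_{L^\infty}\, q_m(y_t) \int_{y_t}^{1} \frac{ds}{q_m(s)}.
\]
Setting $u_t := \ln \int_{y_t}^{1} ds/q_m(s)$ converts this to $|\dot u_t| \le C\|\omega\|_{L^\infty}$, which integrates to $u_t \le u_0 + C\|\omega\|_{L^\infty} t$; since $\rho_m$ is by definition the inverse of the decreasing map $y \mapsto \ln \int_y^1 ds/q_m(s)$, this gives $y_t \ge \rho_m(C\|\omega\|_{L^\infty} t + C')$, from which \eqref{1.5} follows after absorbing constants for large $t$. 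When $\tilde\beta_\calT$ is Dini continuous, the exponential factor in $q_m$ is bounded, so $q_m(s) \asymp s$ on $(0,1)$, the right-hand side of the displayed inequality becomes $\le C\|\omega\|_{L^\infty} y_t \ln(1/y_t)$, and the classical double-exponential approach rate is recovered.

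The pointwise kernel estimate leading to the displayed inequality is the heart of the proof and I expect it to be the main obstacle. Equivalently, one must show that the pushed-forward velocity $v(\xi) := D\calT(\calS(\xi))\,u(\calS(\xi))$ on $\bbD$, whose normal component vanishes on $\partial\bbD$, has normal component at $\xi$ of size at most $C\|\omega\|_{L^\infty}\, q_m(1-|\xi|)\int_{1-|\xi|}^1 ds/q_m(s)$. Under the change of variables, $v(\xi)$ inherits the conformal area factor $|\calS'(\eta)|^2\,d\eta$, and the distortion of $\calS$ near $\partial\bbD$ is governed by $\bar\beta_\calT$: the non-decreasing component $\beta_\calT$ contributes only sign-favorable winding (which is why convex domains are completely unrestricted by the theorem), whereas the oscillatory component $\tilde\beta_\calT$ produces distortion whose integrated size is captured precisely by the factor $\exp\bigl(\tfrac{2}{\pi}\int_s^1 m(r)/r\,dr\bigr)$ in $q_m$. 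The argument should proceed via Warschawski-type estimates for $|\calS'|$ combined with careful sign-aware integration against the disk Biot-Savart kernel, splitting the contributions of the two components.

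Once (i) is established, the flow $X_t$ is a continuous injection of $\Omega$ into $\Omega$; running the same analysis backward in time (with $-u$, which obeys identical bounds) provides the opposite inclusion, so $X_t$ is a homeomorphism and $\{X_t^x : x\in\Omega\} = \Omega$. The transport identity $\omega(t,X_t^x) = \omega_0(x)$ for a.e.~$(t,x)$ is then the standard DiPerna--Lions consequence of the weak formulation \eqref{1.222} for a bounded divergence-free velocity that is locally log-Lipschitz. Continuity of $u$ on $[0,\infty)\times\Omega$ follows from \eqref{1.111}: the transport identity makes $\omega(t,\cdot)$ vary continuously in $t$ in a weak-$*$ sense sufficient to pass through the Biot-Savart integral on each compact subset of $\Omega$, while \eqref{1.4} controls spatial continuity; these together upgrade the a.e.\ ODE \eqref{1.333} to its pointwise form on $[0,\infty)\times\Omega$.

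For (iii), the hypothesis places $\mathrm{supp}(\omega_0 - a)$ inside $\{x : |\calT(x)| \le R_0\}$ for some $R_0 < 1$. By (i)--(ii), for any Yudovich-class solution $\omega_j$ with this datum and any finite $T$, the support of $\omega_j(t,\cdot) - a$ for $t \in [0,T]$ lies in the compact set $\{x : |\calT(x)| \le 1 - \rho_m(C\|\omega_j\|_{L^\infty} T)\} \subset \Omega$. Any two such solutions $\omega_1, \omega_2$ therefore agree with $a$ on a common neighborhood of $\partial\Omega$ throughout $[0,T]$, so their difference is compactly supported in $\Omega$ and the log-Lipschitz estimate \eqref{1.4} holds uniformly on a neighborhood of this common support. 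The classical Yudovich uniqueness argument, adapted to this localized setting, then forces $\omega_1 \equiv \omega_2$ on $[0,T]$, and letting $T \to \infty$ gives global uniqueness.
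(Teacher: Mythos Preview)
Your plan for part (i) is structurally identical to the paper's: track $d(t)=1-|\calT(X_t^x)|$, differentiate via the Biot-Savart law, reduce to the differential inequality $|d'(t)|\le C\|\omega\|_{L^\infty} q_m(d(t))\bigl(\int_{d(t)}^1 ds/q_m(s)+C_\calT\bigr)$, and integrate. Your identification of the kernel estimate as the main obstacle is correct, and the paper isolates it as its Lemma~\ref{T.3.1}. However, your sketch of how to prove it (``Warschawski-type estimates $\ldots$ sign-aware integration'') is too vague to cover what is actually needed. The paper does split $\bar\beta_\calT=\beta_\calT+\tilde\beta_\calT$ and uses the Pommerenke representation \eqref{3.2y'} for $\calS'$, producing factors $e^{\calI(\xi)-\calI(z)}$ and $e^{\calJ(\xi)-\calJ(z)}$; the $\calJ$ part is handled by direct oscillation estimates much as you suggest. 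But the $\calI$ part, which involves an \emph{arbitrary} increasing $\beta_\calT$ with possibly large jumps (up to $\pi$), cannot be bounded pointwise in any useful way. The paper's key device is a rearrangement (``folding'') inequality, Lemma~\ref{FoldingLemma}: concentrating the measure $\beta_\calT$ on a short arc to a single Dirac mass can only increase the relevant integral, and the Dirac case is explicitly computable after Jensen's inequality. Nothing in your sketch hints at this mechanism, and without it the case where $\beta_\calT$ has a jump near $\arg\xi$ with mass in $(\tfrac\pi2,\pi)$ (so $\alpha\in(1,2)$ in the paper's notation) does not close.

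There is a second, more concrete gap in your treatment of (ii). You invoke ``the standard DiPerna--Lions consequence'' for $\omega(t,X_t^x)=\omega_0(x)$, but the paper explicitly explains why this route fails here: DiPerna--Lions as used in prior work (\cite{Lacave-SIAM,LMW,LacZla}) requires extending $u$ to $\bbR^2\setminus\Omega$, which demands $\partial\Omega$ piecewise $C^{1,1}$ --- far more than hypothesis {\bf (H)} gives. The paper instead proves a transport lemma (Lemma~\ref{L.2.1}) valid on \emph{arbitrary} open bounded domains, using the Bonicatto--Gusev superposition principle \cite{BonGus} together with the trace theory of \cite{Boyer}; this is what yields Corollary~\ref{C.2.2}. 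Your outline of (iii) is fine and matches the paper's appeal to \cite[Proposition~3.2]{LacZla}.
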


{\it Remarks.}  
1.  This naturally extends to solutions on time intervals $(0,T)$ for  $T\in(0,\infty)$.\smallskip

2.  Part (i) also shows that $\inf_{|\calT(x)|\le R}d(X_t^x,\partial\Omega)\ge \rho_m(500 \|\omega\|_{L^\infty} t)$ for any $R<1$, due to $\calT$ being H\"older continuous for Lipschitz $\Omega$ (see, e.g., \cite[Theorem 2]{Lesley}).
This is because our proof shows that (i) also holds with 499 in place of 500, and one can easily show that $\rho_m(500 c t)\le \frac 1N\rho_m(499 c t)^N$ for any fixed $c,N> 0$ and all large enough $t>0$.
\smallskip

3.  
A ``borderline'' case for the condition $\int_0^1 \frac {ds}{q_m(s)}=\infty$ is $m(r)=\frac a{|\log r|}$  for all small $r>0$ (with $a\ge 0$).  Here $\int_0^1 \frac {ds}{q_m(s)}=\infty$ holds precisely when $a\le \frac \pi 2$, while $\int_0^1 \frac{m(r)}r dr=\infty$ for all $a>0$.  In this case $\rho_m$ is still a double exponential when $a<\frac\pi 2$, as for Dini continuous  $\tilde\beta_\calT$, but a triple exponential when $a=\frac \pi 2$.  The double-exponential rate is known to be the maximal possible boundary approach rate for  {\it smooth} domains, due to \eqref{1.4} holding there, but \eqref{1.4} fails  even for general convex domains.
See also the remark after Theorem \ref{T.1.2} below.\smallskip



Our second main result, which applies to  concave moduli $m$, shows that Theorem \ref{T.1.1}(i) is essentially sharp, even for stationary solutions.  

\begin{theorem} \lb{T.1.2}
For any concave modulus $m$, there is a domain $\Omega$ satisfying {\bf (H)} and a stationary weak solution $\omega$ from the Yudovich class to the Euler equations on $\Omega$  such that the following hold.  

(i)
If $\int_0^1 \frac {ds}{q_m(s)}<\infty$, then $X_t^x\in \partial \Omega$ for some $x\in\Omega$ and $t>0$.

(ii)
If $\int_0^1 \frac {ds}{q_m(s)}=\infty$, then $|\calT(X_t^x)|\ge 1- \rho_m(c t)$ for some $x\in\Omega$, $c>0$, and all $t\ge 0$.
\end{theorem}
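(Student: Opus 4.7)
My plan is to generalize the concave-corner blow-up construction of \cite{KisZla, LacZla}, replacing the single corner of interior angle $\alpha\in(\pi,2\pi)$ by a distinguished boundary point $p_0$ where the forward tangent argument $\bar\beta_\calT$ turns clockwise at the fastest rate permitted by the modulus $m$.  For the domain, prescribe $\bar\beta_\calT := \beta_\calT+\tilde\beta_\calT$ with $\beta_\calT(\theta) := \theta$ strictly increasing and $\tilde\beta_\calT$ an odd $2\pi$-periodic continuous function equal to $-\tfrac12 m(|\theta|)\,\sgn(\theta)$ for $|\theta|$ small and constant outside a small neighborhood of $\theta = 0$; concavity and subadditivity of $m$ then guarantee $\tilde\beta_\calT$ has modulus of continuity $m$, and the odd symmetry makes $\Omega$ invariant under complex conjugation with $p_0 := \calS(1) \in \bbR$.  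Recover the Riemann map $\calS$ from $\bar\beta_\calT$ via the standard integral formula $\calS(e^{i\theta})-\calS(1) = \int_0^\theta e^{i\bar\beta_\calT(\phi)}|\calS'(e^{i\phi})|\,d\phi$, with $\log|\calS'|$ given in $\bbD$ as the Poisson integral of the boundary Hilbert transform of $\bar\beta_\calT-\theta$.  The amplitude $\tfrac12$ is chosen so that the total clockwise turning stays strictly below $\pi$, keeping $\Omega$ a bounded Lipschitz Jordan domain satisfying \textbf{(H)}; in case (i), where $m$ is large enough that $\int_0^1 ds/q_m(s)<\infty$, the geometry of $\partial\Omega$ near $p_0$ approaches that of a concave corner.

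Take $\omega \equiv 1$, a stationary weak Yudovich solution, with $\psi$ solving $\Delta\psi = 1$ in $\Omega$, $\psi|_{\partial\Omega} = 0$, and $u = \nabla^\perp\psi$.  The pull-back $\tilde\psi(z) := \psi(\calS(z))$ satisfies $\Delta\tilde\psi = |\calS'|^2$ in $\bbD$ with $\tilde\psi|_{\partial\bbD} = 0$, and direct Poisson-integral computation on $\log|\calS'|$ yields the sharp two-sided bound
\[
|\calS'(r)| \;\asymp\; \frac{q_m(1-r)}{1-r} \qquad\text{as } r \to 1^-,
\]
with the exponential factor $\exp\bigl(\tfrac{2}{\pi}\int_{1-r}^1 m(s)/s\,ds\bigr)$ in $q_m$ arising from integrating the clockwise-turning portion of $\tilde\beta_\calT$ against the Poisson kernel concentrated at $r=1$.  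A Green's-function analysis of $\tilde\psi$ then shows that the blow-up of $|\calS'|^2$ at $z=1$ concentrates $\tilde\psi$ strongly there, producing a streamline $\Gamma \subset \bbD$ accumulating at $z=1$.  A particle $X(t)$ on $\calS(\Gamma)$ pulls back to $Z(t):=\calT(X(t))$ satisfying $\dot Z = \nabla^\perp\tilde\psi(Z)/|\calS'(Z)|^2$, and combining the distortion bound with a careful expansion of $\nabla\tilde\psi$ near $z=1$ yields the differential inequality
\[
\dot y(t) \;\le\; -c\,q_m(y(t))\int_{y(t)}^1 \frac{ds}{q_m(s)}, \qquad y(t):=1-|Z(t)|.
\]
Using $\tfrac{d}{dy}\int_y^1 ds/q_m(s) = -1/q_m(y)$, separation of variables gives $\int_{y(t)}^1 ds/q_m(s) \ge e^{ct}\int_{y(0)}^1 ds/q_m(s)$ for all $t \ge 0$.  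In case (i) this forces $y(t) = 0$ in finite time, hence $X(t) \in \partial\Omega$ by continuity of $\calS$ up to $\partial\bbD$; in case (ii) it gives, after absorbing the initial constant into $c$, the bound $y(t) \le \rho_m(ct)$, which is the claimed rate.

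The main obstacle is the chain of sharp estimates feeding the trajectory ODE: two-sidedness of $|\calS'(r)| \asymp q_m(1-r)/(1-r)$, and a Green's-function lower bound on $|\nabla\tilde\psi|$ that recovers precisely the factor $\int_{y}^1 ds/q_m(s)$, with no logarithmic losses that would degrade $\rho_m$ to a slower rate.  Concavity of $m$ is used throughout to control error terms, and in the most delicate regimes near the Dini-continuity borderline, the existence of the streamline $\Gamma$ accumulating at $p_0$ (rather than merely winding densely in a neighborhood) requires a topological argument exploiting the symmetry of $\Omega$ and the blow-up of $|\calS'|^2$.  If $\omega\equiv 1$ proves insufficient for mild $m$, where $|\calS'|$ remains bounded up to $\partial\bbD$, one instead selects $\omega = F(\psi)$ with $F(0)=0$ tailored to force an interior saddle of $\psi$ whose separatrix terminates at $p_0$, extending the stationary-flow constructions of \cite{KisZla, LacZla}.
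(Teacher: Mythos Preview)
Your construction has a fundamental gap: the stationary solution $\omega\equiv 1$ cannot produce any trajectory approaching $\partial\Omega$.  Since $\Delta\psi=1$ in $\Omega$ with $\psi|_{\partial\Omega}=0$, the strong maximum principle forces $\psi<0$ throughout $\Omega$.  Every Euler trajectory for $u=\nabla^\perp\psi$ lies on a single level set $\{\psi=c\}$ with $c<0$, and each such level set is compact in $\Omega$, hence bounded away from $\partial\Omega$ by a fixed positive distance.  Thus $d(X_t^x,\partial\Omega)$ stays bounded below uniformly in $t$, and neither (i) nor (ii) can hold.  Your proposed mechanism --- that blow-up of $|\calS'|^2$ ``produces a streamline $\Gamma$ accumulating at $z=1$'' --- is impossible here, because no interior level set of $\psi$ meets $\partial\Omega$.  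The fallback $\omega=F(\psi)$ has the same defect whenever $\psi$ keeps a sign in $\Omega$, and you offer no construction of an $F$ for which it does not; this is not a ``mild $m$'' issue but a structural one.

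There is also a sign error in your $|\calS'|$ asymptotics.  A clockwise-turning (concave) boundary arc makes $|\calS'|$ \emph{decay}, not blow up, at the corresponding preimage: compare Schwarz--Christoffel at a concave corner of interior angle $\alpha>\pi$, where $|\calS'(z)|\sim|z-1|^{\alpha/\pi-1}\to 0$.  The Poisson-type formula for $\log|\calS'|$ gives, for your $\tilde\beta_\calT$, the bound $|\calS'(r)|\asymp Q_m(1-r)^{-1/2}$ as $r\to 1^-$, the reciprocal square root of what you assert.

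The paper avoids both problems by taking the \emph{odd} vorticity $\omega=\chi_{\Omega^+}-\chi_{\Omega^-}$ on a domain symmetric about the real axis.  Then $\psi$ is odd in $x_2$ and vanishes on the interior segment $\Omega^0=\Omega\cap(\bbR\times\{0\})$, which is therefore an invariant streamline terminating at $p_0\in\partial\Omega$.  On $\Omega^0$ the dynamics coincides with that of $\omega\equiv 1$ on the half-domain $\Omega^+$ (where $\Omega^0$ is part of the \emph{boundary}), and the Biot--Savart law for $\Omega^+$ via its Riemann map $\calT^+=\calR\calT$ (with $\calR:\bbD^+\to\bbD$) then yields, together with the correct estimate $\det D\calT(x)\asymp Q_m(|\calT(x)-1|)$, the differential inequality $d'(t)\le -c\,q_m(d(t))\bigl(\int_{c^{-1}d(t)}^1 \frac{ds}{q_m(s)}-C\bigr)$ for $d(t)=1-|\calT(X_t^x)|$.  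Your separation-of-variables step from this inequality is fine; it is the route to it that fails.
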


{\it Remark.}  
Note that if $m(r)= a(L_1(\frac 1r)\dots L_{k-1}(\frac 1r))^{-1} + \frac \pi 2 \sum_{j=1}^{k-2} (L_1(\frac 1r)\dots L_j(\frac 1r))^{-1}$  for all small enough $r>0$, with $k\ge 2$, $a\in[0,\frac\pi 2)$, and $L_j(r)$ being $\ln r$ composed $j$ times, then  $\rho_m$ is essentially a $k$-tuple exponential.  Therefore all such boundary approach rates do occur on some domains $\Omega$ to which Theorem \ref{T.1.1}(i) applies.
\smallskip


We also note that Theorem \ref{T.1.1} has a natural analog when the forward tangent vector is defined via arc-length parametrization of $\partial\Omega$, rather than via $\calS$.  If $\sigma:[0,2\pi]\to\partial\Omega$ is the (counter-clockwise) constant speed parametrization of $\partial\Omega$ (extended to be $2\pi$-periodic on $\bbR$, and obviously unique up to translation), 
then Lemma 1 in  \cite{War} shows that $\calT\circ \sigma$ and its inverse (modulo $2\pi$) are H\" older continuous.  If we therefore use
\beq\lb{1.7c}
\bar\nu_\Omega(\tht) := \lim_{\phi \rightarrow \tht+} \frac{\sigma(\phi)-\sigma(\tht)}  {|\sigma(\phi)-\sigma(\tht)|} ,
\eeq
instead of \eqref{1.7a},
and the corresponding $\tilde\beta_\Omega$ (with $\bar\beta_\Omega,\beta_\Omega,\tilde\beta_\Omega$ chosen analogously to $\bar\beta_\calT,\beta_\calT,\tilde\beta_\calT$) has some modulus of continuity $m$, then  $\tilde\beta_\calT$ has modulus of continuity $\tilde m(r):=m(Cr^\gamma)$ for some $C,\gamma>0$.
But since a simple change of variables shows that $\int_0^1 \frac{m(r)}r dr<\infty$ is equivalent to $\int_0^1 \frac{m(Cr^\gamma)}r dr<\infty$, we obtain the following result.

\begin{cor} \lb{C.1.4}
Theorem \ref{T.1.1}
 continues to hold when \eqref{1.7a} and $\bar\beta_\calT,\beta_\calT,\tilde\beta_\calT$ in {\bf (H)} are replaced by \eqref{1.7c} and $\bar\beta_\Omega,\beta_\Omega,\tilde\beta_\Omega$, respectively, and if  $\tilde\beta_\Omega$ is also Dini continuous.
\end{cor}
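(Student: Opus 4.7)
The plan is to transfer the modulus-of-continuity hypothesis from the arc-length setting to the Riemann-map setting and then invoke Theorem \ref{T.1.1} directly. Let $\psi:\bbR\to\bbR$ be the strictly increasing homeomorphism with $\psi(t+2\pi)=\psi(t)+2\pi$ defined by the identity $\sigma(t)=\calS(e^{i\psi(t)})$; this is the change of variable between the two parametrizations of $\partial\Omega$. By \cite[Lemma~1]{War} (cited just before the corollary), both $\psi$ and $\psi^{-1}$ are H\"older continuous modulo $2\pi$, say with exponent $\gamma\in(0,1]$ and constant $C>0$. Since the forward tangent vector at a fixed boundary point does not depend on the parametrization, $\bar\nu_\Omega(t)=\bar\nu_\calT(\psi(t))$, and the $[0,2\pi)$-normalizations at zero yield $\bar\beta_\Omega=\bar\beta_\calT\circ\psi + 2\pi k$ for some $k\in\bbZ$.

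Given a decomposition $\bar\beta_\Omega=\beta_\Omega+\tilde\beta_\Omega$ with $\beta_\Omega$ non-decreasing and $\tilde\beta_\Omega$ Dini continuous with modulus $m$, I would set
\[
\beta_\calT(s) := \beta_\Omega(\psi^{-1}(s))-2\pi k \qquad\text{and}\qquad \tilde\beta_\calT(s) := \tilde\beta_\Omega(\psi^{-1}(s)).
\]
Then $\beta_\calT$ is non-decreasing as a composition of non-decreasing functions, both functions have $2\pi$-periodic distributional derivatives (since $\psi^{-1}(s+2\pi)=\psi^{-1}(s)+2\pi$), and their sum is $\bar\beta_\calT$. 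The resulting modulus of continuity of $\tilde\beta_\calT$ is $\tilde m(r):=m(Cr^\gamma)$, capped at $m(2\pi)$ if necessary; this is again a modulus in the sense of the paper, with subadditivity following from $(a+b)^\gamma\le a^\gamma+b^\gamma$ (valid for $\gamma\le 1$) combined with monotonicity and subadditivity of $m$.

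It remains to verify that Dini continuity is preserved. A straightforward change of variable $u=Cr^\gamma$ gives $\frac{dr}{r}=\frac{du}{\gamma u}$, so that
\[
\int_0^1 \frac{\tilde m(r)}{r}\,dr = \frac{1}{\gamma}\int_0^{C} \frac{m(u)}{u}\,du,
\]
which is finite under the Dini hypothesis on $\tilde\beta_\Omega$. Hence hypothesis \textbf{(H)} holds in the Riemann-map formulation with modulus $\tilde m$, and $\tilde\beta_\calT$ is Dini continuous, so the Dini-continuous form of Theorem \ref{T.1.1} applies to $\Omega$ and yields all the stated conclusions (with the $m$-independent bound $1-\exp(-e^{500\|\omega\|_{L^\infty}t})$). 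The only real bookkeeping is tracking the $2\pi\bbZ$ normalization convention for $\bar{\mathrm{arg}}$ and verifying subadditivity of $\tilde m$; neither is a genuine obstacle, so this corollary is essentially a soft consequence of Theorem \ref{T.1.1} combined with the Warschawski estimate.
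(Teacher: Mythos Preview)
Your argument is correct and follows essentially the same route as the paper: use the Warschawski--Schober H\"older estimate to transfer the modulus from the arc-length parametrization to the Riemann-map parametrization via $\tilde m(r)=m(Cr^\gamma)$, then observe that a change of variables preserves Dini continuity, and finally invoke Theorem~\ref{T.1.1}. The paper's discussion immediately preceding the corollary sketches exactly this, with fewer details than you provide (in particular, the paper does not spell out the subadditivity check for $\tilde m$ or the explicit definition of $\psi$, but these are the same ingredients).
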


{\it Remarks.}
1.  Of course, while $\bar\beta_\calT,\beta_\calT,\tilde\beta_\calT$ depend on $\calT$, they can also be made to only depend on $\Omega$ because we are free to choose $\calT$.
\smallskip

2.  Note that if an open bounded simply connected Lipschitz domain $\Omega$ can be touched from the outside  by a disc of uniform radius at each point of $\partial\Omega$ (i.e., $\Omega$ satisfies the {\it uniform exterior sphere condition}), and we replace \eqref{1.7a}  by \eqref{1.7c}, then these hypotheses are satisfied with $m(r)=Cr$ for some constant $C$.  Hence Corollary \ref{C.1.4}  
holds for all such domains.

Finally, we provide here a version of Theorem \ref{T.1.1}(ii) for general open bounded domains, which follows from its proof and is also  of independent interest.  To the best of our knowledge, such results previously required $\partial\Omega$ to be at least $C^{1,1}$ (see, e.g., \cite{Lacave-SIAM, LMW, LacZla}).

\begin{cor} \lb{C.2.2}
Let $\omega$ from the Yudovich class be a weak solution  to the Euler equations on an open bounded domain $\Omega\subseteq\bbR^2$, on time interval $(0,T)$ for some $T\in(0,\infty]$ and with initial condition $\omega_0 \in L^\infty (\Omega)$.
Then $\omega(t,X_t^x)=\omega_0(x)$ for a.e.~$t\in(0,T)$ and a.e.~$x\in\Omega$ with $t_x>t$,  the velocity $u$ is continuous on $[0,T) \times\Omega$ (as well as on $[0,T] \times\Omega$ if $T<\infty$), and \eqref{1.333} holds for all $x\in\Omega$ and $t\in[0,t_x)$.
\end{cor}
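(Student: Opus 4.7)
The plan is to split the corollary into three locally-posed claims: joint continuity of $u$, validity of \eqref{1.333} for every $t\in[0,t_x)$, and the Lagrangian identity $\omega(t,X_t^x)=\omega_0(x)$. The key input, noted just before \eqref{1.333} in the paper, is that on each compact $K\Subset\Omega$ the velocity is log-Lipschitz in space uniformly in $t\in(0,T)$, that is, \eqref{1.4} holds with $C=C_{\Omega,K}$. This follows from \eqref{1.111} because $\calT$ is biholomorphic from $\Omega$ onto $\bbD$ with smooth nonvanishing derivative on any $K\Subset\Omega$, reducing matters to the classical Calder\'on--Zygmund argument on $\bbD$ (see \cite[Chapter 2]{MP}). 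Crucially, this input requires no regularity of $\partial\Omega$ whatsoever.

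For continuity of $u$, the uniform spatial log-Lipschitz estimate is already a large part of the statement, so it suffices to prove that $t\mapsto u(t,x)$ is continuous for each fixed $x\in\Omega$. Testing \eqref{1.222} against $\varphi(t,y)=\chi(t)\psi(y)$ with $\psi\in C_c^\infty(\Omega)$ shows that $t\mapsto\int_\Omega\omega(t,y)\psi(y)\,dy$ is absolutely continuous on $[0,T)$ with $L^\infty$-bounded derivative and with $\int_\Omega\omega_0\psi\,dy$ as its initial value. Combined with the uniform bound $\|\omega(t,\cdot)\|_{L^\infty}\le\|\omega\|_{L^\infty}$, this yields weak-$*$ continuity of $t\mapsto\omega(t,\cdot)$ into $L^\infty(\Omega)$ up to $t=0$, and up to $t=T$ when $T<\infty$ after defining $\omega(T,\cdot)$ as the corresponding weak-$*$ limit. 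Since for each fixed $x\in\Omega$ the $y$-kernel in \eqref{1.111} lies in $L^1(\Omega)$ (its only singularity is of order $|x-y|^{-1}$ at $y=x$, via the smoothness of $\calT$ near $x$), this produces pointwise-in-$x$ temporal continuity of $u$, which together with the uniform spatial modulus upgrades by the triangle inequality to joint continuity on $[0,T)\times\Omega$ (and on $[0,T]\times\Omega$ if $T<\infty$). Equation \eqref{1.333} for every $t\in[0,t_x)$ is then immediate: $X_t^x$ is a priori absolutely continuous in $t$ with the ODE holding a.e., and continuity of $s\mapsto u(s,X_s^x)$ combined with the fundamental theorem of calculus promotes this equality to every $t$.

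The main obstacle is the Lagrangian identity, since neither $\omega$ nor the flow is a priori smooth and $\partial\Omega$ may be arbitrarily wild. I would argue locally: fix $x_0\in\Omega$ and a compact $K\Subset\Omega$ containing $x_0$ in its interior, then use boundedness of $u$ on compacts together with the log-Lipschitz modulus and a Gronwall-type estimate to produce a neighborhood $K_0\Subset K$ of $x_0$ and $\tau>0$ with $\{X_t^x:x\in K_0,\,t\in[0,\tau]\}\subseteq K$, and with $x\mapsto X_t^x$ a homeomorphism of $K_0$ onto its image for each such $t$. On $[0,\tau]\times K$ the log-Lipschitz bound forces $\nabla u\in L^p$ for every $p<\infty$, which is more than enough to invoke the DiPerna--Lions commutator lemma applied to \eqref{1.1}: spatially mollifying $\omega$ by $\rho_\epsilon$ gives
\[
\partial_t(\omega*\rho_\epsilon)+u\cdot\nabla(\omega*\rho_\epsilon)=r_\epsilon,\qquad r_\epsilon\to 0\text{ in }L^1([0,\tau]\times K)
\]
as $\epsilon\to 0$. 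The smooth identity $(\omega*\rho_\epsilon)(t,X_t^x)-(\omega*\rho_\epsilon)(0,x)=\int_0^t r_\epsilon(s,X_s^x)\,ds$ then passes to the limit (along a.e.~$x\in K_0$ after extracting a subsequence, using $L^1$ convergence along the regular flow), yielding $\omega(t,X_t^x)=\omega_0(x)$ for a.e.~$(t,x)\in[0,\tau]\times K_0$. A standard open-cover argument in $x$ and iteration in time, restarting at positive times strictly less than $t_x$, then extends the identity to a.e.~$(t,x)$ with $t\in[0,t_x)$, completing the proof.
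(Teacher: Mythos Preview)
Your approach is correct and takes a genuinely different route from the paper. The paper establishes the Lagrangian identity first, via Lemma~\ref{L.2.1}: it reduces to smooth exhaustions of $\Omega$ and then invokes the Bonicatto--Gusev superposition principle \cite{BonGus} together with a trace result of Boyer \cite{Boyer}. Only afterward does it deduce $\omega\in C([0,T];L^1(\Omega))$ and from that continuity of $u$. You reverse the order, obtaining continuity of $u$ directly from weak-$*$ continuity of $t\mapsto\omega(t,\cdot)$ and the $L^1$ integrability of the Biot--Savart kernel, and you prove the Lagrangian identity by a purely local DiPerna--Lions commutator argument on compact subsets, iterated in time. The last point is notable: the paper explicitly remarks that earlier applications of DiPerna--Lions required extending $u$ across $\partial\Omega$, forcing $C^{1,1}$ boundary regularity, and introduces Lemma~\ref{L.2.1} precisely to avoid this. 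Your localization avoids it too, and more cheaply, without the external inputs from \cite{BonGus,Boyer}; the paper's approach, on the other hand, yields the cleaner intermediate statement $\omega\in C([0,T];L^1)$.

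Two small repairs are needed. Corollary~\ref{C.2.2} concerns \emph{general} open bounded $\Omega$, where a Riemann map and \eqref{1.111} need not be available; you should instead use the Dirichlet Green's function $G_\Omega$, whose gradient $\nabla_x^\perp G_\Omega(x,\cdot)$ has the same interior properties you invoke (free-space singularity $|x-y|^{-1}$ plus a harmonic correction smooth away from $\partial\Omega$), so the local log-Lipschitz and $L^1$-kernel claims still hold. And the step ``the log-Lipschitz bound forces $\nabla u\in L^p$ for every $p<\infty$'' is not a valid implication; the $L^p_{\rm loc}$ bound you need comes instead from interior Calder\'on--Zygmund estimates for $\nabla\nabla^\perp\Delta^{-1}\omega$, not from the modulus of continuity of $u$ alone.
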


{\it Remark.}
So even when $\partial\Omega$ is very irregular, vorticity might be created (at $\partial\Omega$) only if enough particle trajectories ``depart'' from the boundary into $\Omega$, so that $\Omega\setminus\{X_t^x\,|\,x\in\Omega\}$ has positive measure for some $t\in(0,T)$.

\medskip
\noindent {\bf Organization of the Paper and Acknowledgements}
\medskip

We prove Theorem \ref{T.1.1}(i) in Section \ref{S3a}, and then show in Section \ref{S2} how Theorem~\ref{T.1.1}(ii,iii) follows from it (the proof of Theorem \ref{T.1.1}(ii) also yields Corollary \ref{C.2.2}).  The proof of Lemma~\ref{T.3.1}, a crucial estimate used to obtain Theorem \ref{T.1.1}(i), appears in Section \ref{S3} (with a technical lemma used in it proved in Section \ref{S5}). We note that this proof becomes much simpler when the forward tangent vector $\bar\beta_\calT$ is itself Dini continuous (see the start of Section~\ref{S3}). The proof of Theorem~\ref{T.1.2}, which is related to that of Lemma \ref{T.3.1}, follows it in Section \ref{S4}.

We thank Claude Bardos, Camillo De Lellis, Peter Ebenfelt, Christophe Lacave, and Ming Xiao for helpful pointers to literature.
ZH acknowledges partial support by NSF grant DMS-1652284.
AZ was  supported in part by NSF grants DMS-1652284 and DMS-1900943.

\section{Proof of Theorem \ref{T.1.1}(i)} \lb{S3a}

Take any $x\in\Omega$ and let
\[
d(t):=1- | \calT(X_{t}^{x}) |
\]
be the distance of $\calT(X_{t}^{x})$ from $\partial\bbD$.  Then we have
\begin{equation*}
 d'(t)=- \frac{ \calT(X_{t}^{x}) }{| \mathcal{T}(X_{t}^{x})| } \cdot D\mathcal{T}(X_{t}^{x}) \frac{d}{dt} X_{t}^{x}
\end{equation*}
as long as $|\mathcal{T}(X_{t}^{x})|\in(0,1)$.
Since $D\mathcal{T}$ is of the form $\begin{pmatrix} a & b \\ -b & a \end{pmatrix}$ because $\calT$ is analytic,  we have $D\mathcal{T}D\mathcal{T}^T=(\det D\mathcal{T}) I_{2}$. The Biot-Savart law \eqref{1.111} for $\frac{d}{dt} X_{t}^{x}$ now shows that
\begin{align*}
 d'(t)= &
- \frac{ \det D\mathcal{T}(X_{t}^{x}) }{2\pi | \mathcal{T}(X_{t}^{x})| } \int_{\Omega} \left( \frac{ - \mathcal{T}(X_{t}^{x}) \cdot \mathcal{T}(y)^\perp}{| \mathcal{T}(X_{t}^{x}) - \mathcal{T}(y)|^2} + \frac{ \mathcal{T}(X_{t}^{x}) \cdot \mathcal{T}(y)^{*\perp}}{| \mathcal{T}(X_{t}^{x})- \mathcal{T}(y)^*|^2 } \right) \omega(t,y)\, dy \\
 =& \frac{ \det D\mathcal{T}(X_{t}^{x}) (1 - | \mathcal{T}(X_{t}^{x}) |^2)}{2\pi | \mathcal{T}(X_{t}^{x})| } \int_{\Omega} \frac{|\mathcal{T}(y)|^2 (1-|\mathcal{T}(y)|^2) \mathcal{T}(X_{t}^{x}) \cdot \mathcal{T}(y)^\perp }{| \mathcal{T}(X_{t}^{x}) - \mathcal{T}(y)|^2 \, | |\mathcal{T}(y)|^2 \mathcal{T}(X_{t}^{x})- \mathcal{T}(y)|^2 } \omega(t,y)\, dy.
\end{align*}
where $z^*:=z|z|^{-2}$ and $(a,b)^\perp:=(-b,a)$.  
After the change of variables  $z=\calT(y)$, we obtain
\[
 |d'(t)| \leq d(t)
\frac{ 2 \| \omega \|_{L^\infty}  }{\pi | \mathcal{T}(X_{t}^{x})| }  \det D\mathcal{T}(X_{t}^{x})\int_{\bbD} \frac{ (1- |z|) |\mathcal{T}(X_{t}^{x}) \cdot z^\perp | }{| \mathcal{T}(X_{t}^{x}) - z|^2 \, | |z|^2 \mathcal{T}(X_{t}^{x})- z|^2 } \det D\mathcal{T}^{-1}(z) \,dz.
\]

This estimate already appeared in \cite{LacZla}, but we will use the following crucial result to tightly bound its right-hand side for much more general domains.

\begin{lemma}\label{T.3.1}
Assume {\bf (H)} and that $\int_0^1 \frac {ds}{q_m(s)}=\infty$.
There is  $C<500$ and a  ($\calT$-dependent) constant $C_{\mathcal T}>0$ 
 such that if $|\xi|\in[\frac 12,1)$, then
\beq \lb{3.1}
\det D\mathcal{T}(\mathcal T^{-1}(\xi)) \int_\bbD \frac{(1-|z|)|\xi \cdot z^{\perp}|}{|\xi-z|^2 \, ||z|^2\xi-z|^2} \det D\mathcal{T}^{-1}(z)\,dz \leq  C\, Q_m(1-|\xi|) \left( \int_{1-|\xi|}^1\frac {ds}{sQ_m(s)} + C_{\mathcal T} \right),
\eeq		 
with $Q_m(s):=s^{-1}q_m(s)=\exp \left( \frac 2\pi \int_{s}^1 \frac{m(r)}r dr \right)$.
\end{lemma}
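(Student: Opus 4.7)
The plan is to rewrite \eqref{3.1} as an estimate on the inverse Riemann map $\calS=\calT^{-1}$. Since $\det D\calT(\calS(\xi))=|\calS'(\xi)|^{-2}$ and $\det D\calT^{-1}(z)=|\calS'(z)|^2$, the left-hand side of \eqref{3.1} equals
\[
\frac{1}{|\calS'(\xi)|^2}\int_\bbD \frac{(1-|z|)\,|\xi\cdot z^\perp|}{|\xi-z|^2\,||z|^2\xi-z|^2}\,|\calS'(z)|^2\,dz,
\]
so everything reduces to controlling the pointwise ratio $|\calS'(z)|/|\calS'(\xi)|$ on $\bbD$ by a factor involving $Q_m(1-|z|)/Q_m(1-|\xi|)$ times an auxiliary weight that, upon integration against the Biot--Savart kernel, produces the right-hand side of \eqref{3.1}.

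The bridge to the modulus $m$ is the boundary identity $\arg\calS'(e^{i\tht})\equiv \bar\beta_\calT(\tht)-\tht-\tfrac\pi 2 \pmod{2\pi}$, which holds at every $\tht$ where the forward tangent exists (almost all such $\tht$, since $\Omega$ is regulated). As $\log\calS'$ is holomorphic on $\bbD$, $\log|\calS'|$ is the harmonic conjugate of $\arg\calS'$. Using \textbf{(H)} I would split $\log \calS' = F_{\mathrm{mon}} + F_{\mathrm{Dini}} + \mathrm{const}$ so that $\mathrm{Im}\,F_{\mathrm{mon}}$ has boundary values $\beta_\calT(\tht)-\tht$ and $\mathrm{Im}\,F_{\mathrm{Dini}}$ has boundary values $\tilde\beta_\calT(\tht)$. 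For $F_{\mathrm{mon}}$ the positive-measure representation $F_{\mathrm{mon}}(z)=\int \log(1-ze^{-i\phi})\,d\mu(\phi)+\mathrm{const}$ with $d\mu\ge 0$ makes $|e^{F_{\mathrm{mon}}}|$ explicitly computable and lets one bound its contribution to the kernel integral by Fubini against $d\mu$. For $F_{\mathrm{Dini}}$, I would invoke the classical Privalov--Zygmund estimate for the Hilbert transform: since $\mathrm{Im}\,F_{\mathrm{Dini}}$ on $\partial\bbD$ has modulus of continuity $m$, $\mathrm{Re}\,F_{\mathrm{Dini}}$ on $\bbD$ satisfies
\[
\bigl|\mathrm{Re}\,F_{\mathrm{Dini}}(z)-\mathrm{Re}\,F_{\mathrm{Dini}}(\xi)\bigr|\le \tfrac{2}{\pi}\int_{s_0}^1 \tfrac{m(r)}{r}\,dr+O(1),
\]
with $s_0\asymp \max\{1-|z|,\,1-|\xi|,\,|z-\xi|\}$. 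Exponentiating and recalling that $Q_m(s)=\exp(\tfrac{2}{\pi}\int_s^1 m(r)/r\,dr)$ produces exactly the $Q_m$-ratio needed. Crucially, the Privalov constant $2/\pi$ matches the one built into $Q_m$, which is what makes the estimate sharp.

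Combining the bounds for $F_{\mathrm{mon}}$ and $F_{\mathrm{Dini}}$, the integral in \eqref{3.1} is dominated by a weighted Biot--Savart integral on $\bbD$ with weight roughly $Q_m(1-|z|)^2/Q_m(1-|\xi|)^2$. Splitting $\bbD$ into $\{|z|\le \tfrac12\}$ (which contributes only to the $\calT$-dependent constant $C_\calT$) and the annulus $\{|z|>\tfrac12\}$, and then using polar-like coordinates centered at $\xi/|\xi|$ with the angular direction integrated first, the kernel collapses to an essentially one-dimensional integral in $s=1-|z|$ that evaluates to $Q_m(1-|\xi|)\int_{1-|\xi|}^1 \frac{ds}{sQ_m(s)}$. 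The main obstacle will be keeping the final constant below $500$: this is an accounting problem requiring simultaneous sharpness of the Privalov estimate (with the $2/\pi$ constant), of the monotone-part control, and of the final polar integration; no individual step is qualitatively delicate, but all must be made tight at once. The additive $C_\calT$ conveniently absorbs both the interior-of-disc contribution and the Privalov ``lower-order'' terms, leaving the leading asymptotic coefficient sharp.
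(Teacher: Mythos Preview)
Your overall architecture is right and matches the paper's: split $\log|\calS'|$ into a monotone piece $\mathcal I$ and a modulus-controlled piece $\mathcal J$, bound $e^{\mathcal J(\xi)-\mathcal J(z)}$ by a $Q_m$-ratio, and then integrate the kernel. But two points need correction.

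First, and most seriously, your handling of the monotone part is not a proof. The factor $e^{\mathcal I(\xi)-\mathcal I(z)}=\exp\bigl(\tfrac{2}{\pi}\int \ln\tfrac{|e^{i\tht}-\xi|}{|e^{i\tht}-z|}\,d\beta_\calT(\tht)\bigr)$ is \emph{not} linear in the measure $\beta_\calT$, so ``Fubini against $d\mu$'' is not applicable. The genuine difficulty is that near a point $\tht^*$ with $\beta_\calT(\{\tht^*\})$ close to $\pi$ (an interior angle close to $2\pi$), $e^{-\mathcal I(z)}\sim |e^{i\tht^*}-z|^{-\alpha}$ with $\alpha$ close to $2$, and this singularity must be shown compatible with the Biot--Savart kernel. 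The paper's solution is: (a) choose a small $\delta>0$ so that $\beta_\calT$ has mass at most $\tfrac43\pi$ on every arc of length $4\delta$ (possible because $\Omega$ is Lipschitz, so each point mass is $<\pi$); (b) apply Jensen to convert the exponential of the $\beta_\calT$-integral into a power $\bigl(\tfrac{1}{\beta_\calT(I)}\int_I |e^{i\tht}-z|^{-1}\,d\beta_\calT\bigr)^\alpha$ with $\alpha\le\tfrac83$; (c) prove a non-trivial ``folding lemma'' showing that this power-integral, against any kernel that is radially symmetric and decreasing about $e^{i\tht^*}$, is \emph{maximized} when $\beta_\calT|_I$ is concentrated at $\tht^*$. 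Step (c) reduces to a single Dirac mass and is the crux; nothing resembling Fubini gives it. Your outline gives no hint of (a)--(c), so the monotone part is a gap.

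Second, your Privalov--Zygmund bound is misstated. You claim $|\mathrm{Re}\,F_{\mathrm{Dini}}(z)-\mathrm{Re}\,F_{\mathrm{Dini}}(\xi)|\le\tfrac2\pi\int_{s_0}^1\tfrac{m(r)}{r}\,dr+O(1)$ with $s_0\asymp\max\{1-|z|,1-|\xi|,|z-\xi|\}$. This is false in general: if $1-|\xi|=1-|z|=\epsilon\ll|\xi-z|$ and $\tilde\beta_\calT$ is increasing near $\tht_\xi$ but decreasing near $\tht_z$, then $\mathcal J(\xi)-\mathcal J(z)$ can be of order $2\ln Q_m(\epsilon)$, not $\ln Q_m(|\xi-z|)$. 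The correct estimate (which the paper proves by hand, splitting the circle into regions near $\tht_\xi$, near $\tht_z$, and far from both) is
\[
e^{\mathcal J(\xi)-\mathcal J(z)}\le C_m\,\frac{Q_m(\min\{1-|\xi|,|\xi-z|\})}{Q_m(|\xi-z|)}\cdot\frac{Q_m(\min\{1-|z|,|\xi-z|\})}{Q_m(|\xi-z|)}\,.
\]
This two-sided structure is what, combined with the folding reduction for $\mathcal I$, makes the final kernel integral collapse to $\int_{1-|\xi|}^1\tfrac{ds}{sQ_m(s)}$. With your $s_0=\max$ version, the $Q_m(1-|z|)$ factor needed to tame the singularity of $e^{-\mathcal I(z)}$ near $\partial\bbD$ would be missing. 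To obtain $C<500$ the paper must moreover sharpen the local mass bound to $\beta_\calT(I)\le 1.05\pi$ and track explicit constants through both the Jensen/folding step and the $\mathcal J$-estimate; this is not merely ``accounting'' but requires the specific structure of the folding argument.
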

	
{\it Remark.}  
Note that $Q_m$ is non-increasing, and $\lim_{s\to 0} s^\alpha Q_m(s)=0$ for all $\alpha>0$ because $s^\alpha= \exp(\alpha\int_s^1\frac{dr}r)$.
\smallskip

Lemma \ref{T.3.1} with $\xi:=\calT(X_{t}^{x})$ now yields
\[
d'(t)\geq - C \| \omega \|_{L^\infty} q_m(d(t)) \left( \int_{d(t)}^1\frac {ds}{q_m(s)} + C_{\mathcal T} \right)
\] 
when $d(t)\in(0,\frac 12]$, 
with some $C<500$ and $C_\calT>0$.
Hence
\[
\frac d{dt} \ln \left(\int_{d(t)}^1\frac {ds}{q_m(s)} + C_{\mathcal T}  \right) \le C\| \omega \|_{L^\infty},
\]
and so
\[
\ln \int_{d(t)}^1\frac {ds}{q_m(s)} \le C\| \omega \|_{L^\infty} t + \ln  \left( \int_{\min\{d(0),1/2\}}^1\frac {ds}{q_m(s)} + C_{\mathcal T} \right)  
\]
for all $t\ge 0$.
Therefore 
\beq\lb{2.2}
d(t) \ge \rho_m  \left( C\| \omega \|_{L^\infty} t + \ln  \left( \int_{\min\{d(0),1/2\}}^1\frac {ds}{q_m(s)} + C_{\mathcal T} \right) \right).
\eeq
This is no less than   $\rho_m  \left( 500\| \omega \|_{L^\infty} t \right)$ for all large $t\ge 0$, uniformly in all $x$ with $|\calT(x)|\le R$ (for any $R<1$, except when $\omega\equiv 0$).  And if $M:=\int_0^1\frac {m(r)}r dr<\infty$, then $\rho_m(y)\ge \exp({-e^{y+2M/\pi}})$, so this is no less than $\exp({-e^{500 \| \omega \|_{L^\infty} t}})$ for all large $t\ge 0$, uniformly in all $x$ with $|\calT(x)|\le R$.

Hence, to conclude Theorem \ref{T.1.1}(i), it only remains to prove Lemma \ref{T.3.1}.
Since the proof is more involved, we do so in Section \ref{S3} below, after first showing how to obtain Theorem~\ref{T.1.1}(ii,iii) from Theorem \ref{T.1.1}(i).

\section{Proofs of Theorem \ref{T.1.1}(ii,iii) and Corollary \ref{C.2.2}} \lb{S2}

Theorem~\ref{T.1.1}(iii) follows immediately from Theorem \ref{T.1.1}(ii) and Proposition 3.2 in \cite{LacZla}, which shows that solutions from Theorem \ref{T.1.1}(ii) are unique as long as they remain constant near $\partial\Omega$  (constancy near the non-$C^{2,\gamma}$ portion of $\partial\Omega$ for some $\gamma>0$, where $u$ may be far from Lipschitz, is in fact sufficient).  It therefore suffices to prove Theorem \ref{T.1.1}(ii).

The first claim follows from the fact that the estimate \eqref{2.2} equally applies to the solutions of the time-reversed ODE $\frac d{ds} Y(s)=-u(t-s,Y(s))$ with $Y(0)\in\Omega$ (which of course satisfy $Y(s)=X_{t-s}^{Y(t)}$). 
The proof of the second  claim was obtained in \cite{Lacave-SIAM, LMW, LacZla} for some  sufficiently regular domains by looking at \eqref{1.1} as a (passive) transport equation with given $u$ and  $\omega_0$, and proving uniqueness of its solutions (using also that $t_x=\infty$ for all $x\in\Omega$).  This is because $\tilde\omega(t,X_t^x):=\omega_0(x)$ can be shown to be its weak solution in the sense of \eqref{1.222}.  The uniqueness proofs used the DiPerna-Lions theory, which required relevant extensions of $u$ and $\omega$ to $\bbR^2\setminus\Omega$ (the latter by 0).  This necessitated $\partial\Omega$ to be piecewise $C^{1,1}$, in addition to having $t_x=\infty$ for all $x\in\Omega$, so that the extension of $u$ is sufficiently regular for the DiPerna-Lions theory to be applicable.

We avoid this extension argument, and hence also extra regularity hypotheses on $\Omega$,
thanks to the following result concerning weak solutions to the  transport equation \eqref{1.1}.

\begin{lemma} \lb{L.2.1}
Let $\Omega\subseteq\bbR^d$ be open and $T\in(0,\infty]$.   Let $u\in L^\infty_{\rm loc}([0,T]\times\Omega)$ satisfy
\beq \lb{2.4}
\sup_{t\in[0,T]}\sup_{x,y\in K} \frac {|u(t,x)-u(t,y)|}{|x-y| \max \{1,-\ln|x-y|\} } <\infty
\eeq
for any compact $K\subseteq\Omega$, as well as \eqref{1.0b} on $(0,T)\times\Omega$.
If $\omega\in L^\infty_{\rm loc} ([0,T]\times\Omega)$ is a weak solution to \eqref{1.1} with initial condition $\omega_0 \in L^\infty_{\rm loc} (\Omega)$ and $X_t^x$ is from \eqref{1.333}, then we have $\omega(t,X_t^x)=\omega_0(x)$ for a.e.~$t\in(0,T)$ and a.e.~$x\in\Omega$ with $t_x>t$.
\end{lemma}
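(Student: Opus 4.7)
The plan is a localized DiPerna-Lions argument carried out entirely inside $\Omega$, so that no extension of $u$ across $\partial\Omega$ is needed. The log-Lipschitz bound \eqref{2.4} is Osgood's uniqueness condition for \eqref{1.333}, so for each $x\in\Omega$ there is a unique solution $X_t^x$ on its maximal interval $[0,t_x)$, the map $(t,x)\mapsto X_t^x$ is continuous, and for each $t>0$ the set $A_t:=\{x\in\Omega:t_x>t\}$ is open with $X_t:A_t\to X_t(A_t)\subseteq\Omega$ a homeomorphism onto an open subset of $\Omega$. I would then argue that $X_t$ is measure-preserving on $A_t$: standard mollifications $u*\rho_\epsilon$ are smooth on $\Omega_\epsilon$ and automatically inherit $\nabla\cdot(u*\rho_\epsilon)=0$, their classical flows preserve Lebesgue measure by Liouville's theorem, and they converge uniformly on compact subsets of $\Omega$ to $u$, so Osgood stability transfers measure-preservation to $X_t$ in the limit.

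Next I would regularize $\omega$ itself by spatial mollification, making sure the mollifier never reaches outside $\Omega$. With $\rho_\epsilon$ a standard mollifier supported in $B_\epsilon(0)$, set $\omega^\epsilon(t,x):=(\omega(t,\cdot)*\rho_\epsilon)(x)$ on $\Omega_\epsilon:=\{x\in\Omega:d(x,\partial\Omega)>\epsilon\}$. Then $\omega^\epsilon$ is smooth in $x$ and, by \eqref{1.222} together with $\nabla\cdot u=0$, Lipschitz in $t$; a direct computation gives pointwise a.e.\ on $(0,T)\times\Omega_\epsilon$ the transport-with-remainder equation
\[
\partial_t\omega^\epsilon+u\cdot\nabla\omega^\epsilon=r^\epsilon,\qquad r^\epsilon(t,x):=\int_\Omega\bigl[u(t,x)-u(t,y)\bigr]\cdot\nabla\rho_\epsilon(x-y)\,\omega(t,y)\,dy.
\]
The DiPerna-Lions commutator estimate, adapted to the log-Lipschitz setting by combining \eqref{2.4} with the cancellations $\int\nabla\rho_\epsilon=0$ and $\nabla\cdot u=0$, will give $r^\epsilon\to 0$ in $L^1_{\rm loc}((0,T)\times\Omega)$ as $\epsilon\to 0$.

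With these pieces in hand I would conclude by integration along trajectories and duality. Since $\omega^\epsilon$ is classical, for every compact $K\subset\Omega$ and $t_0\in(0,T)$ such that $K_0:=\{x\in K\cap\Omega_{2\epsilon}:X_s^x\in K\ \forall\,s\in[0,t_0]\}$ is nonempty, and every $x\in K_0$,
\[
\omega^\epsilon(t,X_t^x)-\omega_0^\epsilon(x)=\int_0^t r^\epsilon(s,X_s^x)\,ds\qquad(t\in[0,t_0]).
\]
Testing against an arbitrary $\varphi\in C_c^\infty(K_0)$ and changing variables $y=X_s^x$ on the right via measure-preservation of $X_s$ yields
\[
\int\varphi(x)\bigl[\omega^\epsilon(t,X_t^x)-\omega_0^\epsilon(x)\bigr]\,dx=\int_0^t\int\varphi(X_s^{-1}(y))\,r^\epsilon(s,y)\,dy\,ds.
\]
The right side vanishes as $\epsilon\to 0$ because $r^\epsilon\to 0$ in $L^1_{\rm loc}$ while $y\mapsto\varphi(X_s^{-1}(y))$ is bounded, continuous, and supported in a fixed compact subset of $\Omega$. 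On the left, measure-preservation of the flow transfers a.e.\ convergence $\omega^\epsilon\to\omega$ into a.e.\ convergence along trajectories, so bounded convergence produces $\int\varphi(x)[\omega(t,X_t^x)-\omega_0(x)]\,dx=0$ for a.e.\ $t\in[0,t_0]$. Arbitrariness of $\varphi$, $K$, and $t_0$ combined with Fubini then yields $\omega(t,X_t^x)=\omega_0(x)$ for a.e.\ $(t,x)$ with $t_x>t$.

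The hardest part will be the commutator estimate $r^\epsilon\to 0$ in $L^1_{\rm loc}$ under just the log-Lipschitz hypothesis: the naive pointwise bound $|r^\epsilon|\le C(1+|\log\epsilon|)\|\omega\|_\infty$ is much too weak, so one must exploit the mean-zero property of $\nabla\rho_\epsilon$ together with $\nabla\cdot u=0$ to extract genuine $L^1_{\rm loc}$ decay. A secondary but easier point will be to justify the measure-preserving property of the log-Lipschitz divergence-free flow, for which the smooth approximations $u*\rho_\epsilon$ described in the first paragraph suffice.
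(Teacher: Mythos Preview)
Your localized DiPerna--Lions strategy hinges on the commutator estimate $r^\epsilon\to 0$ in $L^1_{\rm loc}$, which you correctly flag as the hardest step but do not prove. The standard DiPerna--Lions commutator lemma requires $u\in W^{1,1}_{\rm loc}$ (or $BV_{\rm loc}$ via Ambrosio), and the log-Lipschitz hypothesis \eqref{2.4} alone does not imply this. Your suggestion to ``exploit the mean-zero property of $\nabla\rho_\epsilon$ together with $\nabla\cdot u=0$'' is precisely the mechanism behind the classical proof: one subtracts the linearization $Du(t,x)(x-y)$ inside the integral, uses $\int w_j\,\partial_i\rho=-\delta_{ij}$ to see that the linear part contributes $-(\nabla\cdot u)\,\omega=0$, and then controls the remainder. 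But that remainder control needs $Du\in L^1_{\rm loc}$ in an essential way; with only log-Lipschitz regularity the pointwise bound $|r^\epsilon|\lesssim(1+|\log\epsilon|)\|\omega\|_{L^\infty}$ that you yourself note is genuinely the best available, and there is no density argument to fall back on since smooth fields are not dense in any norm that would control the commutator uniformly in $\epsilon$. So as written the proof has a real gap at exactly the step you identified.

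The paper avoids commutator estimates entirely. After exhausting $\Omega$ by smooth bounded subdomains and reducing to $\omega\ge 0$ by adding a constant, it applies a trace result of Boyer to place the weak solution in the framework of Bonicatto--Gusev, whose superposition principle represents any nonnegative bounded solution of the continuity equation as an integral over characteristic curves. Since \eqref{2.4} is an Osgood condition, those characteristics are unique and coincide with the flow $X_t^x$, and this yields $\omega(t,X_t^x)=\omega_0(x)$ directly --- with no Sobolev regularity of $u$ required. Your scheme could be rescued for the specific Euler application, because interior Calder\'on--Zygmund gives $u\in W^{1,p}_{\rm loc}(\Omega)$ for all $p<\infty$ regardless of $\partial\Omega$; but that would prove a strictly weaker lemma than the one stated, and would not yield Corollary~\ref{C.2.2} at the generality the paper claims.
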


\begin{proof}
Let $\Omega_1\subseteq\Omega_2\subseteq\dots$ be smooth open bounded sets in $\bbR^2$ with $\bar\Omega_n\subseteq\Omega = \bigcup_{n\ge 1} \Omega_n$.  Since $\omega$ is also a weak solution to \eqref{1.1} on $\Omega_n$ and exit times $t_{x,n}$ of $X_t^x$ from $\Omega_n$ then satisfy $\lim_{n\to\infty} t_{x,n}=t_x$ for each $x\in\Omega$, it obviously suffices to prove that $\omega(t,X_t^x)=\omega_0(x)$ for a.e.~$t\in(0,T)$ and a.e.~$x\in\Omega_n$ such that $t_{x,n}>t$.  We can therefore assume that $\Omega$ is smooth and bounded, \eqref{2.4} holds with $K$ replaced by $\Omega$, and  $u,\omega,\omega_0$ are all bounded.
We can also assume without loss that $\omega\ge 0$ and $\omega_0\ge 0$, by adding a large constant to them. 

Extend the particle trajectories from \eqref{1.333} by $X_t^x:=\lim_{s\uparrow t_x}X_{s}^x\in\partial\Omega$ for all $t\ge t_x$, and let $\Omega_t:=\{X_t^x \,|\, x\in\Omega \,\,\&\,\, t_x>t\}$ for all $t\in[0,T)$ (these sets are open due to \eqref{2.4}).
 Then the lemma essentially follows from Theorem 2 in \cite{BonGus} but in order to apply it, we need to show that $\omega$ weakly satisfies some boundary conditions on $(0,T)\times\partial\Omega$ (even though these do not affect the result).  
 To this end we employ Theorem 3.1 and Remark 3.1 in \cite{Boyer}, which show that there is indeed some $\kappa\in L^\infty((0,T)\times\partial\Omega)$ such that
\[
\int_0^{T} \int_\Omega \omega \left(  \partial_t \varphi +  u \cdot \nabla \varphi \right) \,dx dt= -\int_\Omega \omega_0 \varphi(0, \cdot) \,dx + \int_0^T\int_{\partial\Omega}  (u\cdot n) \varphi \kappa \,d\sigma dt
\]
holds for all $\varphi \in C_0^\infty \left([0, T) \times \bar\Omega\right)$. 

Theorem 2 in \cite{BonGus} now shows that there is a positive measure $\eta$ on $\Omega$ such that
\beq \lb{2.1}
\int_{\Omega_t} \psi(y)\omega(t,y)\,dy = \int_\Omega \psi(X_t^x)\, d\eta(x)
\eeq
for almost all $t\in(0,T)$ and all $\psi\in C_0^\infty(\Omega_t)$.  (In fact, the measure in \cite{BonGus} is supported on the set of all maximal solutions to the ODE $\frac d{dt}Y(t)=u(t,Y(t))$ on $(0,T)$, 
and the relevant formula holds for all $\psi\in C_0^\infty(\bbR^d)$.  But this  becomes \eqref{2.1} when restricted to the $\psi$ above, with $\eta$ the restriction of the measure from \cite{BonGus} to the set of solutions $\{\{X_t^x\}_{t\in(0,T)}\,|\, x\in\Omega\}$.  This is because uniqueness of solutions for the ODE shows that the other solutions have $Y(t)\notin\Omega_t$ for any $t\in(0,T)$.)  By taking $t\to 0$ in \eqref{2.1}, we obtain
\[
\int_{\Omega} \psi(y)\omega_0(y)\,dy = \int_\Omega \psi(x)\, d\eta(x)
\]
 for  any $\psi\in C_0^\infty(\Omega)$, so $d\eta(x)=\omega_0(x)dx$. 
Letting $\psi$ in \eqref{2.1} be approximate delta functions near all $y\in\Omega_t$ then shows that for almost all $t\in(0,T)$ we have $\omega(t,X_t^x)=\omega_0(x)$ whenever $x$ and $X_t^x$ are Lebesgue points of $\omega_0$ and $\omega(t,\cdot)$, respectively.  This finishes the proof.
\end{proof}

Since  $t_x=\infty$ for all $x\in\Omega$, Lemma \ref{L.2.1} with 
$T=\infty$ now 
proves the second claim in Theorem \ref{T.1.1}(ii).  As in \cite{LacZla}, uniform boundedness of $u$ on any compact subset of $\Omega$ then yields $\omega\in C([0,\infty);L^1(\Omega))$, and continuity of $u$ on $[0,\infty)\times \Omega$ follows from this and the Biot-Savart law.  Then also \eqref{1.333} holds pointwise, finishing the proof of Theorem \ref{T.1.1}(ii).

This argument actually applies on general open bounded $\Omega\subseteq\bbR^2$, without needing $t_x=\infty$ for all $x\in\Omega$.  This is because boundedness of $\omega$ implies $u\in L^\infty ((0,T)\times K)$ for any compact $K\subseteq\Omega$  as well as \eqref{2.4} (for solutions on a time interval $(0,T)$ with $T<\infty$), and these three facts then again yield $\omega\in C([0,T];L^1(\Omega))$ (with $\omega(0,\cdot):=\omega_0$ and $\omega(T,\cdot)$ defined by continuity).  This yields Corollary \ref{C.2.2}.

\section{Proof of Lemma \ref{T.3.1}} \lb{S3}

We can assume that $\tilde\beta_\calT(0)=0$, which is achieved by subtracting $\tilde\beta_\calT(0)$ from $\tilde\beta_\calT$ and adding it to $\beta_\calT$.
Since $\calT$ is analytic, we have  $\det D\calT(z)=|\calT'(z)|^2$, where $\calT'$ is the complex derivative when $\calT$ is considered as a function on $\bbC$.  The same is true for its inverse $\calS$, and we also have $\calS'(z)=\calT'(\calS(z))^{-1}$.
Since $\Omega$ is regulated, Theorem~3.15 in \cite{Pomm}  shows that
\beq \lb{3.2y'}
\mathcal{S}'(z) = |\mathcal{S}'(0)| \, \exp \left(\frac{i}{2\pi} \int_0^{2\pi} \frac{e^{i\tht}+z} {e^{i\tht}-z} \left( \bar \beta_\calT(\tht) -\tht-\frac\pi 2 \right) d\tht \right)
\eeq
for all $z\in\bbD$, and from $\int_0^{2\pi} \frac{e^{i\tht}+z} {e^{i\tht}-z}\,d\tht = 2\pi \in\bbR$ and ${\rm Im}\, \frac{e^{i\tht}+z} {e^{i\tht}-z} = 2 {\rm Im}\, \frac{z} {e^{i\tht}-z}$ we get
\beq \lb{3.2'}
\det D\calS (z)  = \det D\calS (0) \, \exp \left( -\frac{2}{\pi} \int_0^{2\pi}  {\rm Im}\, \frac{z} {e^{i\tht}-z} \left( \bar \beta_\calT(\tht) -\tht \right) d\tht \right)
\eeq
(with $\bar \beta_\calT(\tht) -\tht$ being $2\pi$-periodic).

We note that if $\bar\beta_\calT$ is itself Dini continuous (so we can have $\tilde\beta_\calT=\bar\beta_\calT$ and $\int_0^1\frac {m(r)}r\,dr<\infty$),
then the integral in \eqref{3.2'} is uniformly bounded by some $m$-dependent constant.  Indeed, letting $\tht_z:=\arg z$, this follows from oddness of  ${\rm Im}\, \frac{z} {e^{i(\tht-\tht_z)}-z}$ in $\tht$, together with the bound $| \frac{z} {e^{i\tht}-z}|\le \frac \pi {2|\tht-\tht_z|}$ and therefore
\[
\left | \frac{z} {e^{i\tht}-z} \left( \bar \beta_\calT(\tht) - \bar \beta_\calT(\tht_z) \right) \right| \le \frac\pi 2 \,\frac {m(|\tht-\tht_z|)}{|\tht-\tht_z|}.
\]  
One can also easily show that $\int_\bbD \frac{(1-|z|)|\xi \cdot z^{\perp}|}{|\xi-z|^2 \, ||z|^2\xi-z|^2}dz\le C(\ln(1-|\xi|) +1)$ for some $C>0$ when $|\xi|\in[\frac 12,1)$, using some simple estimates appearing right after the proof of Lemma \ref{CorrOfFolding} below.
So \eqref{3.1}  with the right-hand side $C_m(\ln(1-|\xi|) +1)$ follows immediately in this case.  The rest of this section (and Section \ref{S5}) proves \eqref{3.1} in the general case.

We will now split the exponential in \eqref{3.2'} into the parts corresponding to $\beta_\calT$ and $\tilde\beta_\calT$.  Let $\kappa:=\frac 1{2\pi}(\tilde\beta_\calT(2\pi)-\tilde\beta_\calT(0))$, so that $\tilde\beta_\calT(\tht)-\kappa\tht$ and $\beta_\calT(\tht)-(1-\kappa)\tht$ are both $2\pi$-periodic (note that we also have $\kappa\in[-\frac{m(2\pi)}{2\pi},\min\{1,\frac{m(2\pi)}{2\pi}\}]$ because $\beta_\calT$ is non-decreasing).  Integration by parts then shows that
\[
\int_0^{2\pi} \frac{z} {e^{i\tht}-z} \left( \beta_\calT(\tht) - (1-\kappa)\tht \right) d\tht = i \int_0^{2\pi} \ln(1-ze^{-i\tht})\, d\left( \beta_\calT(\tht) -(1-\kappa)\tht \right),
\]
so from $\int_0^{2\pi} \ln(1-ze^{-i\tht})  d\tht=\ln 1 =0$ we  obtain
\beq\lb{3.3'}
\int_0^{2\pi}  {\rm Im}\, \frac{z} {e^{i\tht}-z} \left( \beta_\calT(\tht) - (1-\kappa)\tht \right) d\tht = \int_0^{2\pi} \ln|e^{i\tht}-z| \,d \beta_\calT(\tht).
\eeq

In order to simplify notation, let $\beta$ be the positive measure with distribution function $\beta_\calT$, and define the function $\tbet(\tht):=\tilde\beta_\calT(\tht)-\kappa\tht$.  Then $\tilde\beta$ has modulus of continuity $\tilde m(r):=m(r)+|\kappa|r$, and we have $\tilde m(r) \le m(r)+\frac {m(2\pi)}{2\pi} r\le 3 m(r)$ for $r\in[0,2\pi]$.  This is because any modulus satisfies $m(a 2^{-n})\ge 2^{-n}m(a)$  for any $a\in[0,2\pi]$ and $n\in\bbN$ (by induction), and thus $m(b)\ge \frac b{2a}m(a)$ whenever $0\le b\le a\le 2\pi$
 since $m$ is non-decreasing.
  We also let 
\[
|\beta|:=\beta((0,2\pi])=\beta_\calT(2\pi)-\beta_\calT(0)=2\pi(1-\kappa)\in[0,2\pi+m(2\pi)].
\]
	Next, for any $z \in \mathbb{D}$, bounded measurable $A \subset \mathbb{R}$, and $\tht^*\in\bbR$, let 
		\begin{align*}
		\mathcal{I}(z, A) &:= \frac{2}{\pi} \int_{A} \ln |e^{i\tht}-z| \,d\beta(\tht ),
		\\ \mathcal{J}(z, A,\tht^*) &:= \frac{2}{\pi} \int_{A}  {\rm Im}\, \frac{z} {e^{i\tht}-z} \,(\tilde \beta (\tht)-\tilde\beta(\tht^*)) \,d\tht,
		\end{align*}
as well as
		\begin{align*}
		\mathcal{I}(z) & := \mathcal{I}(z, (0,2\pi]), 
		\\ \mathcal{J}(z) & := \mathcal{J}(z, (0,2\pi],\tht^*) 
		\end{align*}
(with the latter independent of $\tht^*$ due to $ \int_{0}^{2\pi}  {\rm Im}\, \frac{z} {e^{i\tht}-z} d\tht=0$).
Then \eqref{3.2'} and \eqref{3.3'} yield
\[
	\det D\calS (z)  = \det D\calS (0) \, e^{-\mathcal{I}(z)-\mathcal{J}(z)}
\]
	and
\beq\lb{3.2x}
	\det D\calT (\calS(z))  = \det D\calS (0)^{-1} \, e^{\mathcal{I}(z)+\mathcal{J}(z)}
\eeq
(recall that $\tilde\beta(0)=0$).
In view of this,  \eqref{3.1} becomes
\beq \lb{3.2}
\int_\bbD \frac{(1-|z|)|\xi \cdot z^{\perp}|}{|\xi-z|^2 \, ||z|^2\xi-z|^2} e^{\mathcal{I}(\xi)-\mathcal{I}(z)} e^{\mathcal{J}(\xi)-\mathcal{J}(z)} \, dz \leq C\, Q_m(1-|\xi|)  \left( \int_{1-|\xi|}^1\frac {ds}{sQ_m(s)} + C_\calT \right).
\eeq		 

%
%


To prove this, we  need  the following lemma, whose proof we postpone to Section \ref{S5}.



\begin{lemma} \label{FoldingLemma}
Let $\beta$ be a (positive) measure on $\bbR$ and let $I := [\tht^*-2\delta, \tht^*+2\delta]$ for some $\tht^*\in\bbR$ and $\delta\in(0,\frac\pi 2]$.
Let $H \subset 
\bbD$ be an open region such that if $re^{i(\tht^*+\phi)} \in H$ for some  $r\in(0,1)$ and $|\phi|\le\pi$, then $re^{i(\tht^*+\phi')} \in H$ whenever $|\phi'|\le|\phi|$ (i.e., $H$ is symmetric and angularly convex with respect to  the line connecting 0 and $e^{i\tht^*}$).
 If $\alpha\ge 1$, then 
	\begin{equation}\label{SecondInequality}
	\int_{H} f(z) \left[ g(z) + \frac{1}{\beta(I)} \int_{I} h(|e^{i\tht}-z|) d\beta(\tht) \right]^{\alpha} dz \leq  \int_{H} f(z) \left[ g(z) + h(|e^{i\tht^*}-z|) \right]^{\alpha}dz
	\end{equation}
	holds for any non-increasing $h:(0,\infty)\to[0,\infty)$ and non-negative $f,g\in L^1(H)$ such that $f(re^{i(\tht^*+\phi')}) \geq f(re^{i(\tht^*+\phi)})$ and $g(re^{i(\tht^*+\phi')}) \geq g(re^{i(\tht^*+\phi)})$ whenever $r\in(0,1)$ and  $|\phi'|\le |\phi|$.
\end{lemma}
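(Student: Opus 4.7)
The plan is to prove the inequality by combining Jensen's inequality with a rearrangement argument in polar coordinates. First, since $\alpha\ge 1$ and $g(z)\ge 0$, the function $x\mapsto(g(z)+x)^\alpha$ is convex, so applying Jensen's inequality pointwise in $z$ to the probability measure $\beta(I)^{-1}d\beta|_I$ yields
\[
\left[g(z)+\frac{1}{\beta(I)}\int_I h(|e^{i\theta}-z|)\,d\beta(\theta)\right]^{\alpha}\le \frac{1}{\beta(I)}\int_I [g(z)+h(|e^{i\theta}-z|)]^\alpha\,d\beta(\theta).
\]
After multiplying by $f(z)$, integrating over $z\in H$, and swapping the order of integration, the LHS of \eqref{SecondInequality} is bounded above by $\beta(I)^{-1}\int_I G(\theta)\,d\beta(\theta)$, where $G(\theta):=\int_H f(z)[g(z)+h(|e^{i\theta}-z|)]^\alpha\,dz$. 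Since the RHS of \eqref{SecondInequality} is exactly $G(\theta^*)$, the lemma reduces to proving $G(\theta)\le G(\theta^*)$ for every $\theta\in I$.

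Next, I would rotate so that $\theta^*=0$ and pass to polar coordinates $z=re^{i\phi}$. The angular convexity and symmetry of $H$ ensure that for each $r$ the admissible angles form a symmetric interval $(-\phi_0(r),\phi_0(r))$; the monotonicity hypotheses on $f$ and $g$ make $F_r(\phi):=f(re^{i\phi})$ and $G_r(\phi):=g(re^{i\phi})$ even and non-increasing in $|\phi|$ on this interval; and $H_r(\psi):=h(\sqrt{1+r^2-2r\cos\psi})$ is even and non-increasing in $|\psi|$ on $[-\pi,\pi]$. By Fubini in $(r,\phi)$, the desired $G(\theta)\le G(0)$ reduces to the one-dimensional rearrangement inequality
\[
J_r(\theta):=\int_{-\phi_0(r)}^{\phi_0(r)}F_r(\phi)\,[G_r(\phi)+H_r(\phi-\theta)]^\alpha\,d\phi \le J_r(0)\quad\text{for all }|\theta|\le 2\delta,
\]
for each radius $r$ with $\{|z|=r\}\cap H\neq\emptyset$.

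The main obstacle will be proving this one-dimensional bound. The claim is that aligning the peak of the translated symmetric-decreasing function $H_r(\cdot-\theta)$ with the common peak of $F_r$ and $G_r$ at $\phi=0$ maximizes the weighted integral. When $g\equiv 0$, $J_r(\theta)$ reduces to the convolution of two symmetric-decreasing functions on a symmetric interval, and the inequality follows directly from Hardy--Littlewood, as this convolution is itself symmetric-decreasing in $|\theta|$. For general $g$, the coupling between $G_r$ and $H_r$ inside the $\alpha$-power obstructs a direct pointwise pairing $\phi\leftrightarrow-\phi$: when $\theta\in(0,2\phi)$, one may have $H_r(\phi-\theta)>H_r(\phi)$, so an excursion at $\phi$ need not be immediately balanced by an improvement at $-\phi$. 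I plan to handle this via the layer-cake identity $[G_r(\phi)+H_r(\phi-\theta)]^\alpha=\alpha\int_0^\infty u^{\alpha-1}\mathbf{1}_{G_r(\phi)+H_r(\phi-\theta)>u}\,du$, swapping orders to reduce to the single statement that, for each threshold $u>0$, the $F_r\,d\phi$-measure of $\{\phi\in(-\phi_0(r),\phi_0(r)):G_r(\phi)+H_r(\phi-\theta)>u\}$ is maximized at $\theta=0$. Since the super-level sets of $H_r$ are symmetric intervals and $G_r$ is even and non-increasing in $|\phi|$, this should then follow from a Riesz-type rearrangement against the symmetric-decreasing weight $F_r\,d\phi$ on the symmetric interval $(-\phi_0(r),\phi_0(r))$, with the coupling through $G_r$ absorbed by noting that $G_r$ itself only modifies the effective threshold $u-G_r(\phi)$ in a way compatible with the reflection symmetry.
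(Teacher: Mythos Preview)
Your Jensen-plus-Fubini opening is correct and is a genuinely different route from the paper's. The paper never applies Jensen; instead it iteratively \emph{folds} the measure $\beta|_I$ across lines through the origin, halving the support at each step, and shows via a pointwise pairing $z\leftrightarrow z'$ (reflection across the fold line) together with convexity of $x^\alpha$ that each fold can only increase the integral, the limit being the Dirac mass at $\theta^*$. Your reduction sidesteps this iteration: once you show $G(\theta)\le G(\theta^*)$ for every single $\theta\in I$, the average over $\beta$ is automatically controlled.

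The gap is in your proof of $J_r(\theta)\le J_r(0)$. The levelwise claim---that for each $u>0$ the $F_r\,d\phi$-measure of $\{\phi:G_r(\phi)+H_r(\phi-\theta)>u\}$ is maximized at $\theta=0$---is \emph{false}. Take $\phi_0=\pi$, $F_r\equiv 1$, $G_r=2\cdot\mathbf 1_{(-\varepsilon,\varepsilon)}$, $H_r=\mathbf 1_{(-\varepsilon,\varepsilon)}$, $\theta=3\varepsilon$, and $u=\tfrac12$. At $\theta=0$ the super-level set is $(-\varepsilon,\varepsilon)$, of measure $2\varepsilon$; at $\theta=3\varepsilon$ it is $(-\varepsilon,\varepsilon)\cup(2\varepsilon,4\varepsilon)$, of measure $4\varepsilon$. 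The super-level set of a sum of two bumps centered at different points need not be an interval, so the ``effective threshold $u-G_r(\phi)$'' heuristic cannot be made into a Riesz step.

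The fix is to layer-cake on $H_r$ alone rather than on the sum. Writing
\[
[G_r(\phi)+H_r(\phi-\theta)]^\alpha-G_r(\phi)^\alpha
=\alpha\int_0^\infty (G_r(\phi)+s)^{\alpha-1}\,\mathbf 1_{H_r(\phi-\theta)>s}\,ds
\]
gives
\[
J_r(\theta)=\int \mathbf 1_H F_r G_r^\alpha\,d\phi
+\alpha\int_0^\infty\Bigl(\int P_s(\phi)\,\mathbf 1_{H_r(\phi-\theta)>s}\,d\phi\Bigr)ds,
\qquad P_s:=\mathbf 1_H\,F_r\,(G_r+s)^{\alpha-1}.
\]
Since $\alpha-1\ge 0$, each $P_s$ is symmetric-decreasing about $\phi=0$, while $\{\psi:H_r(\psi)>s\}$ is a symmetric arc about $0$ (or the whole circle, or empty). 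The inner integral is therefore the convolution on $\mathbb S^1$ of two symmetric-decreasing functions evaluated at $\theta$, hence maximized at $\theta=0$ by the Hardy--Littlewood inequality on the circle. This closes the gap and makes your approach a valid---and arguably more conceptual---alternative to the paper's iterative folding.
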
 

{\it Remark.}   The right-hand side of \eqref{SecondInequality} is just the left-hand side for the Dirac measure at $\tht^*$ with mass $\beta(I)$.  That is, concentrating all the mass of $\beta$ on $I$ into $\tht^*$ cannot decrease the value of the integral in \eqref{SecondInequality}.
\smallskip


Next, we claim that there is $\delta>0$ 
such that $\beta([\tht -2\delta,\tht +2\delta]) \leq \frac{4}{3}\pi$ for all $\tht  \in \bbR$ (any number from $(\pi,\frac 32\pi)$ would work in place of $\frac 43\pi$ here).  Let $\delta'>0$ be such that any interval of length $4\delta'$ contains at most one $\tht$ with $\beta(\{\tht \}) \geq \frac{\pi}{9}$ (there are only finitely many such $\tht $ in $(0,2\pi]$). 
Then for each $\tht  \in [0,2\pi]$, find $\delta_\tht \in(0,\delta']$ such that $\beta([\tht -2\delta_\tht ,\tht +2\delta_\tht ]) \leq \beta(\{\tht \})+ \frac{\pi}{9}$.  
Since  $\{ (\tht -2\delta_\tht , \tht +2\delta_\tht ) \,|\,  \tht  \in [-\pi,3\pi]   \}$ is an open cover of $[-\pi,3\pi]$, there is a finite  sub-cover $\{ (\tht _k-2\delta_{\tht _k}, \tht _k+2\delta_{\tht _k}) \,|\, k=1,\dots, N \}$.  If we let $\delta:= \min\{ \delta_{\tht _k} \,|\, k=1,\dots, N \}>0$, then indeed
$\beta([\tht -2\delta,\tht +2\delta]) \le (\pi + \frac{\pi}{9})+(\frac\pi 9+\frac\pi 9) = \frac{4}{3} \pi$ for all $\tht \in[0,2\pi]$ (and so for all $\tht \in\bbR$).  This is because $[\tht -2\delta,\tht +2\delta]\subseteq [\tht _k-2\delta_{\tht _k}, \tht _k+2\delta_{\tht _k}]\cup [\tht _j-2\delta_{\tht _j}, \tht _j+2\delta_{\tht _j}]$ for some $k,j$ such that $|\tht _k-\tht _j|\le 4\delta'$, and hence at most one of $\beta(\{\tht _k\})$ and $\beta(\{\tht _j\})$ is greater than $\frac {\pi}9$ (unless $k=j$), while obviously each is at most $\pi$.

Moreover, let us 
decrease this constant so that $\delta\in(0,\frac {\ln 2}{10^3(1+m(2\pi))}]$ and $m(2\delta)\le \frac {\ln 2}{300}$.
With this ($\calT$-dependent) $\delta$, we can now prove the following estimates (recall \eqref{3.2}).

\begin{lemma} \label{CorrOfFolding}
		Let $\beta,\tbet,m$ and $\delta$ be as above.  
	 	There are $C_{|\beta|,\delta}$ and $C_{m}$ 
		(depending only on $|\beta|,\delta$ and on $m$, respectively, so only on $\calT$) 
		such that for any $\xi\in\bbD$ we have 
		\begin{equation} \label{EqnInCor}
			\int_{\bbD} z^{-1}(1-|z|)^{5/6} e^{\mathcal{I}(\xi)-\mathcal{I}(z)}  dz \leq C_{|\beta|,\delta}, 
		\end{equation}
		and for all $z,\xi\in\bbD$ also
		\begin{equation} \label{3.5}
			e^{\mathcal{J}(\xi)-\mathcal{J}(z)} \leq 
			C_{m} \frac {Q_m(\min\{1-|\xi|,|\xi-z|\})}{Q_m(|\xi-z|)}\, \frac {Q_m(\min\{1-|z|,|\xi-z|\})}{Q_m(|\xi-z|)} . 
		\end{equation}
Moreover, if 
		 $|\xi-z|\le 4\delta$,  then for $\tht_\xi:=\arg \xi$
		 and $I:=[\tht_\xi-2\delta,\tht_\xi+2\delta]$ we have
	\begin{equation} \label{3.6}
			e^{\mathcal{J}(\xi,I,\tht_\xi)-\mathcal{J}(z,I,\tht_\xi)} \leq  2 \,\frac {Q_m(\min\{1-|\xi|,|\xi-z|\})}{Q_m(|\xi-z|)} \, \frac {Q_m(\min\{1-|z|,|\xi-z|\})}{Q_m(|\xi-z|)} .
		\end{equation}	
\end{lemma}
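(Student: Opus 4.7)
My plan is to tackle the three estimates by distinct methods: Jensen's inequality and direct integration for \eqref{EqnInCor}, and harmonic-analytic kernel estimates for \eqref{3.5}--\eqref{3.6}.

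For \eqref{EqnInCor}, first note that $e^{\mathcal{I}(\xi)}\le 2^{2|\beta|/\pi}$ holds uniformly, since $|e^{i\theta}-\xi|\le 2$ for all $\xi\in\bbD$ and $\theta$. It therefore suffices to bound $\int_\bbD|z|^{-1}(1-|z|)^{5/6}e^{-\mathcal{I}(z)}\,dz$. I will partition $(0,2\pi]$ into a finite family $I_1,\dots,I_N$ of intervals of length at most $4\delta$, which forces $\beta(I_k)\le\frac{4}{3}\pi$ for every $k$ by our choice of $\delta$, with $N$ depending only on $\delta$. Jensen's inequality applied to the convex function $e^{c_k x}$ (with $c_k=\frac{2\beta(I_k)}{\pi}$) on each interval gives
\[
e^{-\mathcal{I}(z)}=\prod_{k=1}^N e^{c_k\cdot\frac{1}{\beta(I_k)}\int_{I_k}(-\ln|e^{i\theta}-z|)\,d\beta(\theta)}\le\prod_{k=1}^N\frac{1}{\beta(I_k)}\int_{I_k}|e^{i\theta}-z|^{-2\beta(I_k)/\pi}\,d\beta(\theta).
\]
After Fubini, it remains to bound $\int_\bbD|z|^{-1}(1-|z|)^{5/6}\prod_k|e^{i\theta_k}-z|^{-2\beta(I_k)/\pi}\,dz$ uniformly in $\{\theta_k\in I_k\}$. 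Splitting $\bbD$ into the central disk $\{|z|\le\tfrac12\}$ (where each $|e^{i\theta_k}-z|^{-1}\le 2$, so the product is globally bounded) and $N$ annular sectors near the boundary indexed by $I_k$ (where only the $k$-th factor can be singular since the other $\theta_j$ lie at angular distance at least some $\delta'>0$), the remaining singular sector integral in polar coordinates $(s,\eta)$ around $e^{i\theta_k}$ (with $s=1-|z|$) reduces to $\int s^{5/6}(s^2+\eta^2)^{-\beta(I_k)/\pi}\,ds\,d\eta$. This converges precisely because $\beta(I_k)\le\frac{4}{3}\pi=\frac{16\pi}{12}<\frac{17\pi}{12}$, where $\frac{17\pi}{12}$ is the sharp integrability threshold dictated by the exponent $\frac{5}{6}$. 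Summing over the $N$ sectors yields the constant $C_{|\beta|,\delta}$.

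For \eqref{3.5} and \eqref{3.6}, I will exploit that $\mathcal{J}$ is (up to an additive constant) the harmonic conjugate of the Poisson extension of $\tilde\beta$, with $\tilde\beta$ having modulus of continuity $\tilde m\le 3m$. Writing
\[
\mathcal{J}(\xi)-\mathcal{J}(z)=\frac{2}{\pi}\int_0^{2\pi}\Big[\mathrm{Im}\tfrac{\xi}{e^{i\theta}-\xi}-\mathrm{Im}\tfrac{z}{e^{i\theta}-z}\Big](\tilde\beta(\theta)-\tilde\beta(\theta^*))\,d\theta,
\]
I will choose $\theta^*$ to make the integrand optimally small near $\arg\xi$ or $\arg z$, and split the $\theta$-range into a near zone within scale $\max\{1-|\xi|,1-|z|,|\xi-z|\}$ of $\arg\xi$ or $\arg z$ and a far zone. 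Exploiting oddness of $\mathrm{Im}\frac{z}{e^{i\theta}-z}$ about $\theta=\arg z$ to cancel the constant part, together with $|\tilde\beta(\theta)-\tilde\beta(\theta^*)|\le\tilde m(|\theta-\theta^*|)$, all resulting integrals take the form $\int_a^b\frac{m(r)}{r}\,dr=\frac{\pi}{2}\ln\frac{Q_m(a)}{Q_m(b)}$. After careful accounting of the two ``radial'' contributions (one for each of $\xi$ and $z$ approaching $\partial\bbD$) and absorbing the far-zone contribution into $C_m$, the bound \eqref{3.5} emerges. The local variant \eqref{3.6} follows from the same analysis restricted to $\theta\in I$ with fixed reference point $\theta_\xi$: the far zone is absent and the smallness conditions $m(2\delta)\le\frac{\ln 2}{300}$ and $\delta\le\frac{\ln 2}{10^3(1+m(2\pi))}$ cause the explicit prefactors to compound into exactly $2$.

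The main technical obstacle will be the near-zone kernel estimate for $\mathcal{J}(\xi)-\mathcal{J}(z)$, and specifically the sharp tracking of the three distinct scales $1-|\xi|$, $1-|z|$, and $|\xi-z|$ when $\xi$ and $z$ sit in different regimes relative to $\partial\bbD$. Decomposing the kernel difference in a way that produces the bound as a \emph{product} of two $Q_m$-ratios (one for each point) rather than a single ratio requires a non-trivial pairing of terms, in which oddness-cancellation of the conjugate Poisson kernel has to be combined with subadditivity of $m$ with care. Getting the multiplicative constant down to exactly $2$ in \eqref{3.6} (rather than a generic $C$) is the most delicate quantitative step and is where the specific smallness assumptions on $\delta$ and $m(2\delta)$ are exploited.
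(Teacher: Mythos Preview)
Your strategy for \eqref{3.5} and \eqref{3.6} is essentially the paper's own: split the $\theta$-integration into a far zone where the kernel difference $\frac{\xi}{e^{i\theta}-\xi}-\frac{z}{e^{i\theta}-z}=\frac{e^{i\theta}(\xi-z)}{(e^{i\theta}-\xi)(e^{i\theta}-z)}$ yields an $O(\tilde m(\pi))$ contribution, and a near zone where each of $\mathcal J(\xi,\cdot)$ and $\mathcal J(z,\cdot)$ is estimated separately, with the oddness of $\mathrm{Im}\,\frac{z}{e^{i(r+\theta_z)}-z}$ in $r$ producing the $\int\frac{m(r)}{r}\,dr=\frac\pi2\ln\frac{Q_m(\cdot)}{Q_m(\cdot)}$ terms. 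The paper's near zone is further subdivided into three pieces ($A'$, $A''$, and the rest), but your description captures the right mechanism, and the quantitative tracking for \eqref{3.6} via the smallness of $\delta$ and $m(2\delta)$ is exactly how the constant $2$ arises.

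Your argument for \eqref{EqnInCor}, however, has a real gap. After Jensen and Fubini you need to bound
\[
\int_\bbD |z|^{-1}(1-|z|)^{5/6}\prod_{k=1}^N |e^{i\theta_k}-z|^{-2\beta(I_k)/\pi}\,dz
\]
uniformly in $\theta_k\in I_k$. You then assert that in the annular sector indexed by $I_k$ ``the other $\theta_j$ lie at angular distance at least some $\delta'>0$.'' This is false: since the $I_k$ form a \emph{partition}, the point $\theta_{k+1}\in I_{k+1}$ can sit at the common endpoint of $I_k$ and $I_{k+1}$, hence at angular distance zero from your sector. In that regime two (or with both neighbors, three) factors are simultaneously singular, and the combined exponent $\frac{2}{\pi}\sum_{j}\beta(I_j)$ over adjacent intervals is not controlled by the hypothesis $\beta(I_k)\le\frac{4}{3}\pi$ for a single $I_k$ of length $4\delta$.

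The paper avoids this by \emph{not} decomposing $\beta$ into a product over a partition. Instead it localizes $z$ first to a small half-disk $H=B(e^{i\theta^*},\delta)\cap\bbD$, splits $\beta$ into the nearby part $\beta|_I$ on a single interval $I=[\theta^*-2\delta,\theta^*+2\delta]$ (whose mass is $\le\frac{4}{3}\pi$) and a far part that is harmless, and then applies Lemma~\ref{FoldingLemma} (the folding/symmetrization lemma) to concentrate $\beta|_I$ into a single Dirac mass at $\theta^*$, reducing to the integrable model $\int_H(1-|z|)^{5/6}|e^{i\theta^*}-z|^{-\alpha}\,dz$ with $\alpha\le\frac{8}{3}<\frac{17}{6}$. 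Your approach can likely be repaired---e.g.\ take the partition intervals of length $\frac{4\delta}{3}$ so that any three consecutive ones have total $\beta$-mass $\le\frac{4}{3}\pi$, and use H\"older's inequality to merge the at most three nearby factors---but as written the separation claim is incorrect and the argument does not close.
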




\begin{proof}
Let us start with \eqref{3.5}.   Let $\tht_\xi:=\arg \xi$ and $\tht_z:=\arg z$, as well as
\[
A:= \left \{ \tht\in(0,2\pi] \,\Big|\, \min \{ d (\tht,\tht_\xi), d (\tht,\tht_z) \} \ge \frac 12|\xi-z| \right \},
\] 
where $d$ is the distance in $[0,2\pi]$ with 0 and $2\pi$ identified.
Then from
\[
\left| \frac{\xi} {e^{i\tht}-\xi}  - \frac{z} {e^{i\tht}-z} \right| =  \left| \frac{e^{i\tht}(\xi-z)} {(e^{i\tht}-\xi)(e^{i\tht}-z)} \right| 
\le  \pi^2 \frac{|\xi-z|} {d(\tht,\tht_\xi)\,d(\tht,\tht_z)}
\]
we obtain with $a:=\frac 12 d(\tht_\xi,\tht_z)$ and $b:=\min\{\frac 12 |\xi-z|,a\}\le a$,
\[
|\calJ(\xi,A,\pi)-\calJ(z,A,\pi)| \le 4\pi |\xi-z| \tilde m(\pi) \left( \int_{|\xi-z|/2}^\pi \frac {dr}{r(r+2a)} + \int_{b}^a \frac {dr}{r(2a-r)}  \right).
\]
That is, 
\[
|\calJ(\xi,A,\pi)-\calJ(z,A,\pi)|\le 4\pi \tilde m(\pi) \left( 2+ \frac ba \ln \frac ab \right) \le 10 \pi \tilde m(\pi) \le  C_{m}.
\]

On the complement $A^c:=(0,2\pi]\setminus A$ we can estimate the two $\calJ$ terms individually.  To conclude  \eqref{3.5}, it now suffices to show 
\beq \lb{3.5z}
|\calJ(z,A^c,\pi)| \le C_{m} + \ln \frac {Q_m(\min\{1-|z|,|\xi-z|\})}{Q_m(|\xi-z|)}
\eeq
because an analogous estimate then follows for $\calJ(\xi,A^c,\pi)$ as well.  First note that if we let $A':=\{\tht\in A^c\,|\, d(\tht,\tht_z)> \frac 12 |\xi-z|\}$, then
\[
|\calJ(z,A',\pi)| \le 2\tilde m(\pi) \int_{d(A',\tht_z)}^{3 d(A',\tht_z)} \frac {dr}r \le C_m.
\]
With  $A'':=\{\tht\in A^c \,|\, d(\tht,\tht_z)\le \frac 12 \min\{1-|z|,|\xi-z|\} \}$ we also have
\[
|\calJ(z,A'',\pi)| \le \frac 2\pi \tilde m(\pi) \le C_m
\]
due to $|e^{i\tht}-z|\ge 1-|z|$, which proves \eqref{3.5z} when $|\xi-z|\le 1-|z|$.  If instead $|\xi-z|> 1-|z|$, then we also use  oddness of ${\rm Im}\, \frac{z} {e^{i(r+\tht_z)}-z}$ in $r$ 
and $|e^{i\tht}-z|\ge \sin|\tht-\tht_z|$ to estimate
\[
|\calJ(z,A^c\setminus(A'\cup A''),\pi)|
\le \frac 2\pi \int_{(1-|z|)/2}^{|\xi-z|/2} \frac{\tilde m(2r)}{\sin r} dr \le C_m + \ln \frac {Q_m(1-|z|)}{Q_m(|\xi-z|)},
\]
with the last inequality due to
\[
\int_{a/2}^{b/2} \frac{\tilde m(2r)}{\sin r} dr \le \int_{a}^{b} \frac{\tilde m(s)}{\sin s} ds \le \int_a^b \frac {m(s)}s ds + \int_a^b \left( 10\tilde m(s) + |\kappa| \right)ds \le \int_a^b \frac {m(s)}s ds + C_m
\]
for $0\le a\le b\le 2$ (because $\sup_{s\in[0,2]}(\frac1{\sin s}-\frac 1s)\le 10$).  Hence \eqref{3.5z} follows, proving \eqref{3.5}.

To obtain \eqref{3.6}, we repeat this argument with some minor adjustments.  For
\[
A:= \left \{ \tht\in I \,\Big|\, \min \{ d (\tht,\tht_\xi), d (\tht,\tht_z) \} \ge \frac 12|\xi-z| \right \},
\] 
we obtain the bound
\[
|\calJ(\xi,A,\tht_\xi)-\calJ(z,A,\tht_\xi)|\le 4\pi \tilde m(2\delta) \left( 2+ \frac ba \ln \frac ab \right) \le 10 \pi \tilde m(2\delta) \le  30 \pi  m(2\delta) \le \frac { \ln 2} 3.
\]
(recall that $\tilde m(s)\le 3m(s)$). Hence it suffices to show \eqref{3.5z} with $A^c:=I\setminus A$, and with $\tht_\xi$ and  $\frac { \ln 2} 3$ in place of $\pi$ and $C_m$.
As above, we now obtain
\[
|\calJ(z,A',\tht_\xi)| \le 2\tilde m(2\delta) \int_{d(A',\tht_z)}^{3 d(A',\tht_z)} \frac {dr}r \le 4\ln 3\, m(2\delta) \le \frac {\ln 2}9
\]
and
\[
|\calJ(z,A'',\tht_\xi)| \le \frac 2\pi \tilde m(2\delta) \le   \frac 4\pi m(2\delta) \le \frac {\ln 2}9.
\]
Finally, if $|\xi-z|> 1-|z|$, then we also obtain
\[
|\calJ(z,A^c\setminus(A'\cup A''),\tht_\xi)|
\le \frac 2\pi \int_{(1-|z|)/2}^{|\xi-z|/2} \frac{\tilde m(2r)}{\sin r} dr \le \frac {\ln 2}9 + \ln \frac {Q_m(1-|z|)}{Q_m(|\xi-z|)}
\]
because $\int_a^b \left( 10\tilde m(s) + |\kappa| \right)ds\le 4\delta( 21m(2\pi))\le  \frac {\ln 2}9$ when $0\le a\le b\le 4\delta$.

Now we prove \eqref{EqnInCor}.  We obviously have 
\beq \lb{3.6b}
\max\{\mathcal{I}(\xi),|\calI(z)|\} \le \frac {2\ln 2}\pi |\beta|
\eeq 
for all $\xi\in\bbD$ and all $z\in B(0,\frac 12)$, so  it suffices to prove 
\begin{equation} \label{EqnInCor'}
			\int_{\bbD} (1-|z|)^{5/6} e^{-\mathcal{I}(z)}  dz \leq C_{|\beta|,\delta}. 
\end{equation}			
The integrand  is clearly bounded above by $(\frac\delta 2)^{-2|\beta|/\pi}$ on  $B(0,1-\frac \delta 2)$.  Since $\bbD\setminus B(0,1-\frac \delta 2)$ can be covered by $O(\frac 1\delta)$ disks with centers on $\partial\bbD$ and  radii $\delta$, 
it suffices to prove \eqref{EqnInCor'} with $H:=B(e^{i\tht^*},\delta)\cap\bbD$
in place of $\bbD$,
for any $\tht^*\in\bbR$.

	Let $I: = [\tht^*-2\delta, \tht^* + 2\delta]$ and $\alpha := \frac {2\beta(I)}\pi\in[0,\frac 83]$. 
	Since $\calI (z,(0,2\pi]\setminus \bigcup_{k\in\bbZ} (I + 2k\pi))$ is bounded below by $\frac {2|\beta|}\pi \ln \frac\delta 2$  for all $z \in H$,
	it in fact suffices to prove
		\begin{equation} \label{3.7}
			\int_{H} (1-|z|)^{5/6} e^{-\mathcal{I}(z,I)}dz \leq C.
		\end{equation}	
If $ \alpha \in [0,1]$, then from $1-|z| \leq |e^{i\tht}-z|$ for all $(z,\tht)\in\bbD\times \mathbb{R}$ we indeed have 
\[
\int_{H} (1-|z|)^{5/6} e^{-\mathcal{I}(z,I)}dz = \int_{H} (1-|z|)^{-\alpha+5/6} \exp\left(\frac{2}{\pi}\int_{I} \ln \frac{1-|z|}{|e^{i\tht}-z|} d\beta(\tht) \right)dz \le \int_\bbD (1-|z|)^{-1/6}dz,
\]
as needed.
If  $\alpha \in [1,\frac{8}{3}]$, then we instead use Jensen's inequality and Lemma \ref{FoldingLemma} with $f(z)=(1-|z|)^{5/6}$, $g(z) = 0$, and $h(s)=\frac 1s$ to obtain
\begin{eqnarray*}
   	\int_{H} (1-|z|)^{5/6} e^{-\mathcal{I}(z,I)}dz &\leq&
	\int_{H} (1-|z|)^{5/6} \exp\left[ \alpha \ln \left( \frac 1{\beta(I)} \int_{I} \frac 1{ |e^{i\tht}-z|} d\beta(\tht) \right) \right] dz \\
	&=& \int_{H} (1-|z|)^{5/6} \left(\frac{1}{\beta(I)} \int_{I} \frac{1}{|e^{i\tht}-z|} d\beta(\tht) \right)^{\alpha}     dz \\
 	&\leq& \int_{H} (1-|z|)^{5/6} {|e^{i\tht^*}-z|^{-\alpha}} dz \\
 	&\leq& \int_{H} |e^{i\tht^*}-z|^{-\alpha+5/6} dz \\
 	&\leq& 12\pi. 
\end{eqnarray*}
This proves \eqref{3.7} 
and hence also \eqref{EqnInCor}.
\end{proof}

Now we are ready to prove Lemma \ref{T.3.1}

\begin{proof}[Proof of Lemma \ref{T.3.1}]
For the sake of simplicity, we will prove the result with $C<10^5$ first, and then indicate the changes required to obtain $C<500$. 
Consider the  ($\calT$-dependent) $\delta$ from above.
Recall that we only need to prove \eqref{3.2}, and
note that $\xi \cdot z^{\perp}=(\xi-z) \cdot z^{\perp}$ implies
\beq\lb{3.8}
\frac{|\xi \cdot z^{\perp}|}{|\xi-z|^2 \, ||z|^2\xi-z|^2} 
\le  \frac{1}{|\xi-z|  \, |z|  \, | |z|\xi- \frac z{|z|}|^2} = \frac{1}{|\xi-z|  \, |z|^3  \, |\xi- \frac z{|z|^{2}}|^2}.
\eeq
Together with \eqref{3.6b} and \eqref{3.5} this yields $C_{m}$ such that for any $\xi\in\bbD\setminus B(0,\frac 12)$,
\[
 \int_{B(0,\frac 14)} \frac{(1-|z|)|\xi \cdot z^{\perp}|}{|\xi-z|^2||z|^2\xi-z|^2} e^{\mathcal{I}(\xi)-\mathcal{I}(z)} e^{\mathcal{J}(\xi)-\mathcal{J}(z)} dz
 \leq C_{m} Q_m(1-|\xi|)
\]
because the last fraction in \eqref{3.5} is bounded above by $\exp \left( \frac 2\pi \int_{3/4}^{5/4} \frac{m(r)}r dr \right)$ when $z\in B(0,\frac 14)$
(the dependence of the constant on $|\beta|$ need not be indicated because $0\le|\beta|\le 2\pi+m(2\pi)$). 

If now $|\xi|\in[\frac 12,1)$ and $z\in B(\xi,\frac{1-|\xi|}2)$, then $\mathcal{I}(\xi)-\mathcal{I}(z)\leq \frac {2|\beta|}\pi$ due to $|e^{i\tht}-\xi||e^{i\tht}-z|^{-1} \le 2$ for all $\tht \in \mathbb{R}$.   Hence using  $|\xi- \frac z{|z|^{2}}|\ge 1-|\xi|\ge\frac{1-|z|}2$ in \eqref{3.8} (because $\frac z{|z|^{2}}\notin\bbD$) and $|\xi-z|\le\min\{1-|\xi|,1-|z|\}$ in \eqref{3.5} yields
\[
 \int_{B(\xi,\frac {1-|\xi|}2)} \frac{(1-|z|)|\xi \cdot z^{\perp}|}{|\xi-z|^2||z|^2\xi-z|^2} e^{\mathcal{I}(\xi)-\mathcal{I}(z)}e^{\mathcal{J}(\xi)-\mathcal{J}(z)} dz
\le C_m  \int_{B(\xi,\frac {1-|\xi|}2)}  \frac 1{|\xi-z|(1-|\xi|)} dz\le C_m\pi.
\]
For all other $z\in \bbD\setminus B(0,\frac 14)$, we can bound the right-hand side of \eqref{3.8} above by $\frac{64}{|\xi-z|^3}$, using that $|\frac z{|z|^{2}}|-1\ge 1-|z|$ implies $|\xi-\frac z{|z|^{2}}|\ge|\xi-z|$. This, \eqref{3.5}, \eqref{EqnInCor}, and the bound $Q_m(1-|z|) \le C_m (1-|z|)^{1/6}$ (see the remark after Lemma \ref{T.3.1}) now yield
\[
 \int_{\bbD\setminus (B(\xi,\delta^3)\cup B(0,\frac 14))} \frac{(1-|z|)|\xi \cdot z^{\perp}|}{|\xi-z|^2||z|^2\xi-z|^2} e^{\mathcal{I}(\xi)-\mathcal{I}(z)} e^{\mathcal{J}(\xi)-\mathcal{J}(z)} dz
 \leq   C_{m,\delta} \,Q_m(1-|\xi|).
\]
To obtain \eqref{3.2}, it therefore suffices to prove
\[
 \int_{H_\xi} \frac{1-|z|}{|\xi-z|^3} e^{\mathcal{I}(\xi)-\mathcal{I}(z)}e^{\mathcal{J}(\xi)-\mathcal{J}(z)} dz \le C\,Q_m(1-|\xi|) \left( \int_{1-|\xi|}^1\frac {ds}{sQ_m(s)} + 1 \right)
\]
when $|\xi|\in[1-2\delta^3,1)$, with $H_\xi:=[ B(\xi,\delta^3)\setminus B(\xi,\frac{1-|\xi|}2)]\cap \bbD$ and a universal $C<10^5(1-3\delta^3)^3$.  Since $(1-3\delta^3)^3\ge (1-\frac 3{10^9})^3 >  1-\frac 1{10^8}$, it suffices to obtain $C\le 10^5-1$ here

Let $\tht_\xi:=\arg \xi$, and again let $I:=[\tht_\xi-2\delta, \tht_\xi + 2\delta]$ as well as $\alpha := \frac {2\beta(I)}\pi\in[0,\frac 83]$.  Then $|e^{i\tht}-\xi| \ge\delta $ for all  $\tht  \notin \bigcup_{k\in\bbZ} (I + 2k\pi)$, hence for all such $\tht $ and all  $z\in B(\xi,\delta^3)$ we have $\frac{|e^{i\tht}-\xi|}{|e^{i\tht}-z|}\le \frac1{1-\delta^2} \le  1+\frac\pi{2|\beta|}$ (the last inequality follows from $\delta^2\le\frac\pi{\pi+2|\beta|}$, which is due to $\frac\pi{\pi+2|\beta|}\ge \frac\pi{5\pi+2m(2\pi)} \ge  \frac {\ln 2}{10^3(1+m(2\pi))}\ge\delta$).  This yields for all $z\in B(\xi,\delta^3)$,
\beq\lb{3.9x}
\mathcal{I}(\xi)-\mathcal{I}(z) = \frac{2}{\pi} \int_{(0,2\pi]} \ln\frac{|e^{i\tht}-\xi|}{|e^{i\tht}-z|}d\beta(\tht)  \leq 1+ \frac{2}{\pi} \int_{I} \ln\frac{|e^{i\tht}-\xi|}{|e^{i\tht}-z|}d\beta(\tht). 
\eeq
Similarly, for the same $z$ and $\tht$ we have $| \frac{\xi} {e^{i\tht}-\xi}  - \frac{z} {e^{i\tht}-z} | =   \frac{|\xi-z|} {|e^{i\tht}-\xi||e^{i\tht}-z|} \le \frac\delta{1-\delta^2}  \le \frac 1{4\tilde m(2\pi)}$, so
\beq \lb{3.9y}
\mathcal{J}(\xi)-\mathcal{J}(z) = \mathcal{J}(\xi,(0,2\pi],\tht_\xi)-\mathcal{J}(z,(0,2\pi],\tht_\xi)  
\le 1+ \mathcal{J}(\xi,I,\tht_\xi)-\mathcal{J}(z,I,\tht_\xi).
\eeq
Using \eqref{3.6}, combined with $Q_m(\frac 12(1-|\xi|))Q_m(1-|\xi|)^{-1} \le e^{2m(2\delta^3)/\pi}\le e^{1/100\pi}$ (recall that $|\xi-z|\ge \frac {1-|\xi|}2$) and $Q_m(a)Q_m(b)^{-1}\le \exp(\frac 16\int_a^b \frac 1r dr) = b^{1/6}a^{-1/6}$ for $0<a\le b\le \delta^3$ (because $m(\delta^3)\le m(2\delta)\le \frac\pi {12}$),  we thus obtain
\beq \lb{3.9z}
e^{\mathcal{J}(\xi)-\mathcal{J}(z)}  \le 2\cdot3^{1/6}e^{1+ 1/100\pi} \, \frac {Q_m(1-|\xi|)}{Q_m(|\xi-z|)} \, \frac {|\xi-z|^{1/6}}{(1-|z|)^{1/6}} ,
\eeq
where we also used $1-|z|\le 3|\xi-z|$ for all  $z\in \bbD\setminus B(\xi,\frac{1-|\xi|}2)$.
Estimates \eqref{3.9x} and \eqref{3.9z}, together with $2\cdot 3^{1/6}e^{1+ 1/100\pi}\le 3e$ and 
\beq \lb{3.9w}
\int_{\frac 12(1-|\xi|)}^{1-|\xi|} \frac {ds}{sQ_m(s)} \le \frac {\ln 2}{Q_m(1-|\xi|)} \le  \int_{1-|\xi|}^{2(1-|\xi|)} \frac {ds}{sQ_m(s)} \le \int_{1-|\xi|}^{1} \frac {ds}{sQ_m(s)},
\eeq
now show that it suffices to prove
\beq\lb{3.9}
 \int_{H_\xi}  \frac{(1-|z|)^{5/6}}{|\xi-z|^{17/6}} \exp \left( \frac{2}{\pi} \int_{I} \ln\frac{|e^{i\tht}-\xi|}{|e^{i\tht}-z|}d\beta(\tht) \right)  \frac {dz}{Q_m(|\xi-z|)}  \leq C \left( \int_{\frac 12(1-|\xi|)}^{1}\frac {ds}{sQ_m(s)} + 1 \right)
\eeq
whenever $|\xi|\in[1-2\delta^3,1)$, with some universal  $C\le \frac {10^5 -1}{6e^2}$.  

%


Consider now the case $\alpha\in[0,1]$.  
We have $1-|z|\le |e^{i\tht}-z|$ for all $(z,\tht )\in\bbD\times\bbR$, and $1-|z|\le 3|\xi-z|$ for all  $z\in H_\xi$.  This and the triangle inequality yield 
\beq\lb{3.10}
\frac{|e^{i\tht}-\xi|}{|e^{i\tht}-z|} \le 
 \frac{|\xi-z|}{|e^{i\tht}-z|} +1  \le 4 \frac{|\xi-z|}{1-|z|}
\eeq
for all $(z,\tht )\in H_\xi\times I$.  Therefore the left-hand side of \eqref{3.9} is  bounded above by
\begin{align*}
  4^\alpha  \int_{H_\xi}  \frac{(1-|z|)^{-\alpha+5/6}}{|\xi-z|^{-\alpha+17/6}}  \frac {dz}{Q_m(|\xi-z|)}
 & \le  4^\alpha 3^{1-\alpha} \int_{H_\xi} \frac {(1-|z|)^{-1/6}}{ |\xi-z|^{11/6}} \frac {dz}{Q_m(|\xi-z|)} 
 \\ &\le 4 \int_{\frac 12(1-|\xi|)}^{1} \left(  \int_{A_s} \frac {s^{1/6}}{(1-|\xi + se^{i\phi}|)^{1/6} } d\phi \right) \frac {ds}{sQ_m(s)}
 \\ & = 4 \int_{\frac 12(1-|\xi|)}^{1} \left(  \int_{A_s} {(s^{-1}-|s^{-1}\xi + e^{i\phi}|)^{-1/6} } d\phi \right) \frac {ds}{sQ_m(s)},
\end{align*}
with 
\[
A_s:=\{\phi\in(0,2\pi]\,\big|\, |\xi + se^{i\phi}|<1\}= \{\phi\in(0,2\pi]\,\big|\, |s^{-1}\xi + e^{i\phi}|<s^{-1}\}.
\]
It is not difficult to see that the inside integral is maximized when $s=1-|\xi|$ (i.e., $(0,2\pi]\setminus A_s$ is a single point) for any $|\xi|\in [1-2\delta^3,1)$, in which case the integrand is bounded above by $[\frac 12 (1-\cos(\phi-\tht_\xi))]^{-1/6} = [\sin \frac 12(\phi-\tht_\xi)]^{-1/3}$ because $\delta\le \frac 1{10^3}$.  But then the inside integral is bounded above by $2\int_0^\pi \left(\frac\phi\pi\right)^{-1/3}d\phi=3\pi$.  Hence \eqref{3.9} holds with $C=12\pi$.

Next consider the case  $\alpha\in[1,\frac 83],$ and define the functions   $g(z) :=\min \{ \frac{1}{|\xi-z|}, \frac 2{1-|\xi|} \}$ and 
\[
f(z):=\min \left\{ \frac{(1-|z|)^{5/6}}{|\xi-z|^{-\alpha+17/6}Q_m(|\xi-z|)}, \frac{2^{-\alpha+17/6}(1-|z|)^{5/6}}{(1-|\xi|)^{-\alpha+17/6}Q_m( \frac 12(1-|\xi|))} \right\},
\]
 as well as  $H_\xi' := B(\xi,\delta^3)\cap \bbD \supseteq H_\xi$.  We can now use Jensen's inequality, \eqref{3.10}, and Lemma~\ref{FoldingLemma} to bound the left-hand side of \eqref{3.9} above by
\begin{align*}
\int_{H_\xi} \frac{(1-|z|)^{5/6}}{|\xi-z|^{17/6}} & \left( \frac{1}{\beta(I)}\int_{I} \frac{|e^{i\tht}-\xi|}{|e^{i\tht}-z|} d\beta(\tht) \right)^{\alpha} \frac {dz}{Q_m(|\xi-z|)} 
 \\ & \le \int_{H_\xi} \frac{(1-|z|)^{5/6}}{|\xi-z|^{17/6}}  \left( 1+ \frac{1}{\beta(I)}\int_{I}  \frac{|\xi-z|}{|e^{i\tht}-z|} d\beta(\tht) \right)^{\alpha} \frac {dz}{Q_m(|\xi-z|)}
 \\ & = \int_{H_\xi} \frac{(1-|z|)^{5/6}}{|\xi-z|^{-\alpha+17/6}}  \left( \frac 1{|\xi-z|}+ \frac{1}{\beta(I)}\int_{I}  \frac 1{|e^{i\tht}-z|} d\beta(\tht) \right)^{\alpha} \frac {dz}{Q_m(|\xi-z|)}
 \\ & \le  \int_{H_\xi'} f(z) \left( g(z)+ \frac{1}{\beta(I)}\int_{I}  \frac 1{|e^{i\tht}-z|} d\beta(\tht) \right)^{\alpha} dz 
 \\ & \le  \int_{H_\xi'} f(z) \left( g(z)+  \frac 1{|e^{i\tht_\xi}-z|} \right)^{\alpha} dz 
 \\ & \le \frac {3^{5/6} 2^\alpha\pi}{ Q_m ( \frac 12 (1-|\xi|) )} +  \int_{H_\xi} \frac{(1-|z|)^{5/6}}{|\xi-z|^{-\alpha+17/6}}  \left( \frac 1{|\xi-z|}+  \frac 1{|e^{i\tht_\xi}-z|} \right)^{\alpha} \frac {dz}{Q_m(|\xi-z|)}
   \\ & \le24\pi  + 4 \int_{H_\xi} \frac{(1-|z|)^{5/6}}{|\xi-z|^{-\alpha+17/6}}  \left( \frac 1{|\xi-z|^\alpha}+  \frac 1{|e^{i\tht_\xi}-z|^\alpha} \right) \frac {dz}{Q_m(|\xi-z|)}. 
   \end{align*}
   Notice that Lemma \ref{FoldingLemma} applies because $\frac 2\pi m(\delta^3)\le  \frac 2\pi m(2\delta)\le \frac 16\le \frac{17}6-\alpha$ shows that $s^{-\alpha+17/6}Q_m(s)$ is increasing on $(0,\delta^3]$.
Using again $1-|z|\le 3|\xi-z|$  for  $z\in H_\xi$ 
yields
\[
\int_{H_\xi} \frac{(1-|z|)^{5/6}}{|\xi-z|^{17/6}}   \frac {dz}{Q_m(|\xi-z|)} \le 3^{5/6} \int_{H_\xi} \frac 1{|\xi-z|^{2}} \frac {dz}{Q_m(|\xi-z|)}   \le 6\pi  \int_{\frac 12(1-|\xi|)}^{1} \frac {ds}{sQ_m(s)},
\]
and then we also have with $H^*:=B(e^{i\tht_\xi},\frac {1-|\xi|}2)\cap\bbD$,
\begin{align}
\int_{H_\xi\setminus H^*} \frac{(1-|z|)^{5/6}}{|\xi-z|^{-\alpha+17/6} |e^{i\tht_\xi}-z|^\alpha} \frac {dz}{Q_m(|\xi-z|)}   
& \le 3^\alpha \int_{H_\xi\setminus H^*} \frac{(1-|z|)^{5/6}}{|\xi-z|^{17/6}}  \frac {dz}{Q_m(|\xi-z|)}  \lb{3.9v}
\\ & \le 162\pi  \int_{\frac 12(1-|\xi|)}^{1} \frac {ds}{sQ_m(s)}. \notag
   \end{align}
Finally, from $1-|z|\le |e^{i\tht_\xi}-z|$, $\alpha\le\frac 83$, and $Q_m\ge 1$ on $[0,1]$ we obtain
\[
\int_{H^*} \frac{(1-|z|)^{5/6}}{|\xi-z|^{-\alpha+17/6} |e^{i\tht_\xi}-z|^\alpha}  \frac {dz}{Q_m(|\xi-z|)} \le \left( \frac {1-|\xi|}2 \right)^{\alpha-17/6} \int_{H^*} { |e^{i\tht_\xi}-z|^{-\alpha+5/6} } dz \le 
12\pi.
\]
This proves \eqref{3.9} with $C=672\pi \le \frac {10^5 -1}{6e^2}$.

Finally, to obtain $C<500$, we perform the following adjustments to the above argument.  We choose $\delta>0$ so that $\beta([\tht -2\delta,\tht +2\delta]) \leq 1.05\pi$ for all $\tht  \in \bbR$, so we always have $\alpha\in[0,2.1]$.  The 1 in  \eqref{3.9x} and \eqref{3.9y} can be replaced by an arbitrary positive constant by lowering $\delta$ further.  Similarly the 2 in \eqref{3.6} can be replaced by an arbitrary constant greater than 1, and the power $\frac 16$ in \eqref{3.9z} by an arbitrarily small positive power (which allows us to turn the $3^{1/6}$ in \eqref{3.9z} into an arbitrary constant greater than $1$; this power then also propagates through the rest of the proof).  This means that the constant in \eqref{3.9z} with the new power can be made arbitrarily close to 1.  The right-hand side of \eqref{3.9w} can be multiplied by an arbitrarily small positive constant if we replace the upper bound in the second integral by a large multiple of $1-|\xi|$ instead of $2(1-|\xi|)$ (which is again possible when $\delta>0$ is small enough), so it follows that it suffices to prove \eqref{3.9} with some $C<500$.  Since in \eqref{3.9v} we can actually replace $3^\alpha$ by $(\sqrt 5)^\alpha \le 5^{1.05}< 5.5$, we indeed obtain \eqref{3.9} with $C=4 (6\pi+33\pi)<500$.  While further lowering of $C$ is possible, we do not do so here.
\end{proof}


\section{Proof of Theorem \ref{T.1.2}} \lb{S4}

Let  $\Omega\subseteq\bbR^2$ be a regulated open bounded Lipschitz domain with $\partial\Omega$ a Jordan curve.  Also assume that $\Omega$ is symmetric with respect to the real axis, $0\in\partial\Omega$, and $(1-\eps,1)\times\{0\}\subseteq\Omega$ for some $\eps>0$.  Let $\Omega^\pm:=\Omega\cap(\bbR\times\bbR^\pm)$ and $\Omega^0:=\Omega\cap(\bbR\times\{0\})$ (these are obviously all simply connected).  Then there is a Riemann mapping  $\calT:\Omega\to\bbD$ with $\calT(\Omega^0)=(-1,1)$ and $\calT(0)=1$, and therefore also $\calT(\Omega^\pm)=\bbD^\pm:=\bbD\cap(\bbR\times\bbR^\pm)$.  Assume also that there are $\beta_\calT,\tilde\beta_\calT$ as in {\bf (H)}, and $\tilde\beta_\calT$ has bounded variation.
Then $\calI(z),\calJ(z)$ from the last section are the integrals
		\begin{align}
		\mathcal{I}(z) &= \frac{2}{\pi} \int_{(-\pi,\pi]} \ln |e^{i\tht}-z|\, d\beta_\calT(\tht ), \notag
		\\ \mathcal{J}(z) &= \frac{2}{\pi} \int_{(-\pi,\pi]} \ln|e^{i\tht}-z|\, d \tilde\beta_\calT(\tht), \lb{3.3''}
		\end{align}
where we replaced integration over $(0,2\pi]$ by $(-\pi,\pi]$ for convenience, and the second formula follows similarly to \eqref{3.3'}.

Given any concave modulus $m$ and $r_0\in(0,\frac 12]$ with $m(2r_0)\le \frac \pi 6$, assume that there are $\Omega$ and $\calT$ as above 
with $\beta_\calT\equiv 0$ on $(-1,1)$ and $\tilde\beta_\calT(\tht) = \frac\pi 2- \frac {\sgn(\tht)}2 \,m(2\min\{|\tht |,r_0\})$ for $\tht\in (-\pi,\pi]$.  Concavity of $m$ then guarantees that $\tilde\beta_\calT$ indeed has modulus of continuity $m$.
Notice also that $d\tilde\beta_\calT(\tht)= -\chi_{(-r_0,r_0)} m'(2|\tht|)d\tht$ on $(-\pi,\pi]$, as well as $|\beta_\calT|=2\pi + m(2r_0)\le 7$.
We show at the end of this section that such $\Omega$ and $\calT$ do exist for any $m$ and $r_0\in(0,\frac 12]$ with $m(2r_0)\le \frac \pi 6$.  


We will first show that if $\int_0^1 \frac {ds}{q_m(s)}<\infty$ and  $x\in\Omega^0$, then the trajectory $X_t^x$ for the stationary weak solution $\omega:=\chi_{\Omega^+}-\chi_{\Omega^-}$ to the Euler equations on $\Omega$ will reach $0\in \partial \Omega$ in finite time.  This will prove Theorem \ref{T.1.2}(i).

Due to symmetry, the particle trajectories $X_t^x$ for this solution coincide with those for the stationary solution $\omega\equiv 1$ on $\Omega^+$.  We will therefore now employ the Biot-Savart law on $\Omega^+$.   Let $\calR:\bbD^+\to\bbD$ be a Riemann mapping with $\calR(1)=1$, so that $\calT^+:=\calR\calT:\Omega^+\to\bbD$ is a Riemann mapping with $\calT^+(0)=1$.  The (time-independent) Biot-Savart law for $\omega\equiv 1$ on $\Omega^+$ can therefore be written as
\beq \lb{4.1}
u(x) = D\calT^+(x)^T  \int_{\Omega^+}  \nabla_{\xi}^\perp G_\bbD(\calT^+(x),\calT^+(y)) \, dy,
\eeq
with $G_\bbD(\xi,z)=\frac 1{2\pi}\ln\frac{|\xi-z|}{|\xi-z^*| |z|}$ the Dirichlet Green's function for $\bbD$.  If $x\in\Omega^0\subseteq\partial\Omega^+$, we have $\calT^+(x)\in\partial \bbD$, where $G_\bbD(\cdot,z)$ vanishes for any fixed $z\in\bbD$ (and $G_\bbD(\cdot,z)<0$ on $\bbD$),  so 
\[
\nabla_{\xi}^\perp G_\bbD(\calT^+(x),\calT^+(y)) = | \nabla_{\xi} G_D(\calT^+(x),\calT^+(y))| \calT^+(x)^\perp.
\]
This suggests one to evaluate 
\[
D\calT^+(x)^T \calT^+(x)^\perp = D\calT^+(x)^T (\det D\calT^+(x))^{-1/2} D\calT^+(x)(1,0),
\]
where $(1,0)$ is the counterclockwise unit tangent to $\Omega^+$ at $x\in\Omega^0$, and we used that the action of the matrix $D\calT^+(x)$ is just multiplication by a complex number with magnitude $\sqrt{\det D\calT^+(x)}$.
Since $D\calT^+$ is of the form $\begin{pmatrix} a & b \\ -b & a \end{pmatrix}$, we have 
\[
D\calT^+(x)^T D\calT^+(x)=(\det D\calT^+(x)) I_2,
\]
 so \eqref{4.1} for $x\in\Omega^0$ becomes
\[
u_1(x) = \sqrt{\det D\calT^+(x)}  \int_{\Omega^+}  |\nabla_{\xi} G_\bbD(\calT^+(x),\calT^+(y))| \, dy \qquad\text{and}\qquad u_2(x)=0.
\]
Since $\Omega^0$ is a smooth segment of $\partial \Omega^+$, standard estimates show that $D\calT^+(x)$ is continuous  and non-vanishing on $\Omega^0$.  Since $\frac d{dt} X_t^x=u(X_t^x)$, it follows that for each $x\in\Omega^0$, the trajectory $X_t^x$ either reaches 0 in finite time or converges to 0 as $t\to\infty$.  It therefore suffices to analyze $u_1(x)$ for $x\in\Omega^0$ close to 0.  

If $x\in\Omega^+\cup\Omega^0$ is not close to the left end of $\Omega^0$, then $\calT(x)\in\overline{\bbD^+}$ is not close to $-1$, so standard estimates yield  $\sqrt{\det D\calR(\calT(x))}\in [c|\calT(x)-1|,c^{-1}|\calT(x)-1|]$ for some $c=c_\calT\in(0,1]$ (because $D\calR(z)\sim z-1$ for $z$ near 1, and $D\calR$ only vanishes at $\pm 1$).  So for all $x\in\Omega^+\cup\Omega^0$ not close to the left end of $\Omega^0$ we have
\beq\lb{4.4}
\sqrt{\det D\calT^+(x)} \left( |\calT(x)-1| \sqrt{\det D\calT(x)} \right)^{-1} \in [c,c^{-1}].
\eeq
From \eqref{3.2x} we have
\beq\lb{4.5}
\det D\calT(x)= \det D\calT(\calT^{-1}(0)) e^{\mathcal{I}(\calT(x))+\mathcal{J}(\calT(x))}.
\eeq
Since $\beta_\calT$ is supported away from $\tht=0$, the term $e^{\mathcal{I}(\calT(x))}$ is  bounded above and below by positive numbers, uniformly in all $x$ that are either close to 0 or not close to $\partial\bbD$.  Moreover, \eqref{3.3''} and the specific form of $\tilde\beta_\calT$ give us for $z\in\bbD$, 
\[
\mathcal{J}(z)
\ge - \frac{4}{\pi} \int_0^{r_0} \ln (|z-1|+\tht ) m'(2\tht )d\tht = -\frac 2\pi m(2r_0) \ln(|z-1|+r_0) + \frac{2}{\pi} \int_0^{r_0} \frac {m(2\tht )}{|z-1|+\tht }d\tht . 
\]
We can now estimate (with a constant $C_{m,r_0}$ changing from one inequality to another)
\begin{align*}
\left| \int_0^{r_0} \frac {m(2\tht )}{|z-1|+\tht }d\tht  - \int_{|z-1|}^{1} \frac {m(r)}{r}dr \right|
& \le \left| \int_{|z-1|/2}^{1/2} \frac {m(2\tht )}{|z-1|+\tht }d\tht  - \int_{|z-1|/2}^{1/2} \frac {m(2\tht )}{\tht }d\tht  \right| + C_{m,r_0}
\\ & \le \left| \int_{|z-1|/2}^{1/2} \frac {|z-1|m(2\tht )}{\tht (|z-1|+\tht )}d\tht  \right| + C_{m,r_0}
\\ & \le \|m\|_{L^\infty} \left| \int_{|z-1|/2}^{1/2} \frac {|z-1|}{\tht ^2}d\tht  \right| + C_{m,r_0}
\\ & \le C_{m,r_0}.
\end{align*}
For $z\in\bbD^0:=\bbD\cap(\bbR\times\{0\})$, we now obtain
\beq\lb{4.3}
\left| \mathcal{J}(z) - \frac 2\pi \int_{|z-1|}^{1} \frac {m(r)}{r}dr \right| \le C_{m,r_0}
\eeq
from this and from an opposite estimate via $\mathcal{J}(z)\le - \frac{4}{\pi} \int_{0}^{r_0} \ln( \frac 12(|z-1|+\tht )) m'(2\tht )d\tht $.  Hence, for a new  $c=c_{\calT,r_0,m}>0$ and all $x\in\Omega^0$ not close to the left end of $\Omega^0$ we obtain
\[
\det D\calT(x) \,Q_m(|\calT(x)-1|)^{-1} \in [c,c^{-1}].
\]
This and \eqref{4.4} show that there is  $c=c_{\calT,r_0,m}>0$ such that  for all $x\in\Omega^0$ close to 0 we have
\[
u_1(x) \ge c |\calT(x)-1|  \sqrt{Q_m(|\calT(x)-1|)}  \int_{\Omega^+}  |\nabla_{\xi} G_\bbD(\calT^+(x),\calT^+(y))| \, dy \qquad\text{and}\qquad u_2(x)=0.
\]

If now $X_t^x\in\Omega^0$ is close to 0 and we let $d(t):=1-|\calT(X_t^x)|=|\calT(X_t^x)-1|$, then
\[
d'(t)= - \left| D\calT(X_t^x) \frac d{dt}X_t^x \right| = - \sqrt{\det D\calT(X_t^x)} u_1(X_t^x)
\]
because $D\calT$ is a multiple of $I_2$ on $\Omega^0$.  Therefore we have (with a new $c>0$)
\beq\lb{4.2}
d'(t) \le - c d(t) Q_m(d(t)) \int_{\Omega^+}  |\nabla_{\xi} G_\bbD(\calT^+(X_t^x),\calT^+(y))| \, dy.
\eeq
Since $|\nabla_{\xi} G_\bbD(\xi,z)|$ is uniformly bounded away from 0 in $(\xi,z)\in \partial\bbD\times\kappa\bbD$ for any fixed $\kappa\in(0,1)$, the integral is bounded below by a positive constant.  But then $d'(t) \le - c q_m(d(t))$, which implies
\[
\int_{d(t)}^1\frac {ds}{q_m(s)} \ge ct + \int_{d(0)}^1\frac {ds}{q_m(s)}
\]
for some $c=c_{\calT,m,r_0}\in(0,1]$.
Since the left-hand side is bounded in $t$ if $\int_0^1 \frac {ds}{q_m(s)}<\infty$, we must have $d(t)=0$ for some $t<\infty$.  This proves that $X_t^x$ reaches $0\in\partial\Omega$ in finite time, and hence Theorem \ref{T.1.2}(i).

This construction also allows us to prove Theorem \ref{T.1.2}(ii).  When $\int_0^1 \frac {ds}{q_m(s)}=\infty$, we can estimate the integral in \eqref{4.2} better after first rewriting it via a change of variables as
\beq\lb{4.6}
\int_{\bbD}  |\nabla_{\xi} G_\bbD(\calT^+(X_t^x),z)| \left[ \det D \calT^+((\calT^+)^{-1}(z)) \right]^{-1}\, dz.
\eeq
Now with $\xi:=\calT^+(X_t^x)$ (and still assuming $X_t^x\in\Omega^0$) we have
\[
 | \nabla_{\xi} G_\bbD(\xi,z)|=\left|\frac{\xi-z}{|\xi-z|^2} - \frac{\xi-z^*}{|\xi-z^*|^2}\right| \ge \frac {10c}{|\xi-z|} \ge \frac {c}{|z-1|}
\] 
for some $c>0$ (which will below change from one inequality to another and may also depend on $\calT,m,r_0$) and all $z\in \bbD\cap(B(1,1)\setminus B(1,|\xi-1|))$ that also lie in the sector  with  vertex $1$, angle $\frac \pi 2$, and axis being the real axis
(call this set  $\mathcal{C_\xi}$ and note that $\mathcal{C}_\xi\subseteq\mathcal{C}_1$).  

If $z\in\mathcal{C}_1$, then for $y:=(\calT^+)^{-1}(z)$ (so $\calT(y)=\calR^{-1}(z)$) we have as above
\[
\det D\calT^+(y) \le c  |\calT(y)-1|^2  Q_m(|\calT(y)-1|)= c |\calT(y)-1| q_m(|\calT(y)-1|).
\]
Indeed, this follows from \eqref{4.4}, \eqref{4.5}, and also \eqref{4.3} for $\calT(y)$ in place of $z$.  The latter extends here even though $\calT(y)\in\calR^{-1}(\mathcal{C}_1)\subseteq\bbD^+$ and so $\calT(y)\notin\bbD^0$ because for some $y$-independent $C>0$ we have $\mathcal{J}(\calT(y))\le - \frac{4}{\pi} \int_{0}^{r_0} \ln( \frac 1C(|\calT(y)-1|+\tht )) m'(2\tht )d\tht $ (recall \eqref{3.3''}).  This in turn is due to the distance of any $v\in \calR^{-1}(\mathcal{C}_1)$ to $\partial\bbD$ being comparable to $|v-1|$, since $\mathcal{C}_1$ has the same property.

So for $z\in\mathcal{C}_\xi$, the integrand in \eqref{4.6} can be bounded below by a multiple of
\[
\frac 1{|z-1||\calR^{-1}(z)-1|q_m(|\calR^{-1}(z)-1|)} \ge  \frac {c^3}{|z-1|^{3/2}q_m(c|z-1|^{1/2})},
\]
with the inequality due to  $|\calR(v)-1|\in [c|v-1|^2,c^{-1}|v-1|^2]$ for all $v\in\overline{\bbD^+}$ as well as $q_m(a^{-1}b)=a^{-1}bQ_m(a^{-1}b)\le a^{-1}b Q_m(b)=a^{-1}q_m(b)$ for $a\in(0,1]$.  The integral is therefore bounded below by a multiple of
\[
\int_{|\xi-1|}^{1}  \frac {dr}{\sqrt r q_m(c\sqrt r)} = \frac 2c \int_{c\sqrt{|\xi-1|}}^{c}  \frac {ds}{ q_m(s)}.
\]
Finally, since $|\xi-1|=|\calR(\calT(X_t^x))-1| \le c^{-1} |\calT(X_t^x)-1|^2 = c^{-1}d(t)^2$, from \eqref{4.2} and $cq_m(c^{-1}d)\le q_m(d)$  for $c\in(0,1]$ and $d\in(0,c]$ we obtain
\[
d'(t) \le - c q_m(d(t)) \left( \int_{c^{-1}d(t)}^{1}  \frac {ds}{ q_m(s)} -C\right) \le - c^2 q_m(c^{-1}d(t)) \left( \int_{c^{-1}d(t)}^{1}  \frac {ds}{ q_m(s)} -C \right)
\]
whenever $X_t^x\in\Omega^0$ is close enough to 0, with some $c=c_{\calT,m,r_0}\in(0,1]$ and $C=C_{\calT,m,r_0}\ge0$.  But dividing this by the right-hand side and integrating yields (with a new $C$)
\[
\ln \int_{d(t)}^{1}  \frac {ds}{ q_m(s)} \ge \ln \left( \int_{c^{-1}d(t)}^{1}  \frac {ds}{ q_m(s)} - C \right) \ge ct + \ln \left( \int_{c^{-1}d(0)}^{1}  \frac {ds}{ q_m(s)} -C \right) \ge ct
\]
for all $t>0$, as long as $x\in\Omega^0$ is close enough to 0  (so the last parenthesis is $\ge 1$).
This now yields Theorem \ref{T.1.2}(ii).

\medskip
\noindent {\bf Construction of a Domain Corresponding to a Given Modulus}
\medskip

We will now show that a domain as above does exist.  We will do this by taking the desired $\bar\beta_\calT=\beta_\calT+\tilde\beta_\calT$ and  obtaining the domain $\Omega:= \calS(\bbD)$ via the corresponding mapping $\calS$ from \eqref{3.2y'}.  Since $\bar\beta_\calT$ has bounded variation, we can now use the equivalent formula
\beq \lb{3.2y}
\mathcal{S}'(z) = \mathcal{S}'(0) \, \exp \left(- \frac{1}{\pi} \int_{(-\pi,\pi]} \ln(1-ze^{-i\tht}) \, d\bar\beta_\calT(\tht) \right)
\eeq
(see \cite[Corollary 3.16]{Pomm}).
Our $\Omega$ will in fact be a perturbed isosceles triangle, with one vertex and the center of the opposite ``side'' on the real axis, and the modulus $m$ will be ``attained'' at the center of that side (where $\Omega$ will therefore be concave).

Given any concave modulus $m$ and $r_0\in(0,\frac 12]$ with $m(2r_0)\le \frac\pi 6$, let us define  $\tilde\beta(\tht):=\frac\pi 2-\frac {\sgn(\tht)}2 \,m(2\min\{|\tht |,r_0\})$ on $(-\pi,\pi]$  (and let its derivative be $2\pi$-periodic).  Then let $\beta$ be such that $\beta(0)=0$ and 
\[
d\beta|_{(-\pi,\pi]}:= \left( \frac{2\pi}3+\pi m_0 \right) \delta_{\pi} + \frac{2\pi}3  \delta_{\pi/3} + \frac{2\pi}3 \delta_{-\pi/3},
\]
where $m_0:=\frac 1{\pi} m(2r_0)$ and $\delta_{\tht _0}$ is the Dirac measure at $\tht =\tht _0$.  Clearly $\bar\beta:=\beta+\tilde\beta$ satisfies $\bar\beta(\pi)=\bar\beta(-\pi)+2\pi$, and $\bar\beta-\frac\pi 2$ is odd on $\bbR$ (which is needed for symmetry of $\Omega$ with respect to the real axis).

We now use \eqref{3.2y} with the choice  $\mathcal{S}'(0):=1$ to define
\[
\mathcal{V}(z): = \exp \left(- \frac{1}{\pi} \int_{(-\pi,\pi]} \ln(1-ze^{-i\tht}) \, d\bar\beta(\tht) \right)=
(1+z^3)^{-2/3}v(z),
\]
where we consider the branch of the logarithm with $\ln:\bbR^+\times\bbR\to \bbR\times(-\frac \pi 2, \frac \pi 2)$ (since ${\rm Re}(1-ze^{-i\tht})>0$), use that $\Pi_{k=0}^2 (1-ze^{-i(2k-1)\pi/3})=1+z^3$, and also define
\[
v(z):= (1+z)^{-m_0} \exp \left( \frac{2}{\pi} \int_0^{r_0} \ln(1-ze^{-i\tht}) m'(2\tht ) \, d\tht  \right).
\]
Since  ${\rm Im} \ln(1-ze^{-i\tht})\in (-\frac \pi 2, \frac \pi 2)$ for all $(z,\tht )\in\bbD\times\bbR$, the imaginary part of the above exponent belongs to $(-\frac \pi 2 m_0, \frac \pi 2 m_0)$.  This and ${\rm Re} (1+z)>0$ now yield 
\[
|\arg v(z)|< \pi m_0=m(2r_0)\le \frac \pi 6
\]
 for all $z\in\bbD$.  Since also $|\arg(1+z^3)|<\frac \pi 2$, it follows that ${\rm Re}\, \mathcal{V}(z)>0$  for all $z\in\bbD$.  But then  the mapping 
$\calS:\bbD\to\bbC$ given by
 \[
 \calS(z):=\int_1^z \mathcal{V}(\xi) \, d\xi 
 \]
is injective, and $\calT:=\calS^{-1}$ is a Riemann mapping for $\Omega:=\calS(\bbD)$ with $\partial\Omega$ is a Jordan curve.  Note that $\Omega$ is bounded because $\mathcal V(z)=O(\sum_{k=0}^2 |e^{i(2k-1)\pi/3}-z|^{-5/6})$.  Since $\mathcal{V}((-1,1))\subseteq\bbR^+$, we have $\calS((-1,1))\subseteq\bbR$, and then $\calS((-1,1))=\Omega^0$, with $\calS(1)=0\in\partial\Omega$ its right endpoint.

Observe that  $\arg(\mathcal{V}(e^{i\phi})) $ is uniformly continuous on $(e^{i(2k-1)\pi/3},e^{i(2k+1)\pi/3})$ for $k=0,1,2$.  This is because the same is  true for the argument of  $(1+e^{3i\phi})^{-2/3}(1+e^{i\phi})^{-m_0}$, while
\[
\arg \left( \mathcal{V}(e^{i\phi})(1+e^{3i\phi})^{2/3}(1+e^{i\phi})^{m_0} \right) = \frac{2}{\pi} \int_0^{r_0} \arg(1-e^{i(\phi-\tht)}) m'(2\tht ) \, d\tht,
\]
 which is continuous in $\phi$ because $m$ is continuous.  We therefore have that for each $\eps>0$ there are points $0=\phi_0<\dots<\phi_N=2\pi$ (with $e^{i(2k-1)\pi/3}$ being among them) and $a_1,\dots,a_N\in\bbR$ such that 
$
|\arg(\calS(e^{i\phi'})-\calS(e^{i\phi})) -a_n | < \eps
$
whenever $\phi_{n-1}<\phi<\phi'<\phi_n$.  Then $\Omega$ is a regulated domain by Theorem 3.14 in \cite{Pomm}.  So it has a unit forward tangent vector from \eqref{1.7a} for each $\tht\in\bbR$, and \eqref{3.2y'} 
shows that with its argument $ \bar \beta_\calT$ from \eqref{1.7b} we have
\beq \lb{3.2yy}
\mathcal{V}(z)=\mathcal{S}'(z) =  \exp \left(\frac{i}{2\pi} \int_{-\pi}^{\pi} \frac{e^{i\tht}+z} {e^{i\tht}-z} \left( \bar \beta_\calT(\tht) -\tht-\frac\pi 2 \right) \, d\tht \right)
\eeq
  because $\calS'(0)=\mathcal{V}(0)=1$.

In the definition of $\mathcal{V}$, we can replace $\bar\beta(\tht)$ by the $2\pi$-periodic function $\bar\beta(\tht)-\tht-\frac\pi 2$
because  $\int_0^{2\pi} \ln(1-ze^{-i\tht})  d\tht=\ln 1 =0$.
Integration by parts then  yields
\begin{align*}
\mathcal{V}(z) & = \exp \left(\frac{i}{\pi} \int_{-\pi}^{\pi} \frac{z} {e^{i\tht}-z}  \left( \bar \beta(\tht) -\tht -\frac\pi 2 \right) \, d\tht \right) 
\\ & = \exp \left(\frac{i}{2\pi} \int_{-\pi}^{\pi} \left( \frac{e^{i\tht}+z} {e^{i\tht}-z}-1\right) \left( \bar \beta(\tht) -\tht -\frac\pi 2 \right) \, d\tht \right).
\end{align*}
From this and \eqref{3.2yy} we find that
\[
\frac 1{2\pi} \int_{-\pi}^{\pi}  \frac{e^{i\tht}+z} {e^{i\tht}-z}  \left( \bar \beta_\calT(\tht) - \bar \beta(\tht) \right) d\tht = \frac 1{2\pi} \int_{-\pi}^{\pi}  \left( \bar \beta(\tht) -\tht -\frac\pi 2 \right) \, d\tht + 2k\pi = 2k\pi
\]
for some  $k\in\bbZ$ and all $z\in\bbD$ (because $\bar\beta(\tht)-\frac\pi 2$ and $\tht$ are odd).  Hence $\bar \beta_\calT - \bar \beta\equiv  2k\pi$, so $\Omega$ and $\calT$ are indeed the domain and Riemann mapping we wanted to construct.

\section{Proof of Lemma \ref{FoldingLemma}} \lb{S5}

Monotone Convergence Theorem shows that it suffices to consider bounded $f,g,h$.  We will prove this via a series of ``foldings'' of $\beta|_I$ 
onto smaller and smaller intervals that shrink toward $\tht^*$.
We will show that at each step the relevant integral cannot decrease.  

Define $\beta^0:=\beta|_I$ and let $\beta^1$ be the measure for which 
\[
\beta^1(A)=
\begin{cases}
\beta^0(A) & \text{if $A\subseteq(-\infty,\tht^*-2\delta)\cup (\tht^*,\infty)$},
\\ 0 &  \text{if $A\subseteq [\tht^*-2\delta,\tht^*-\delta)$},
\\ \beta^0(A\cup(2(\tht^*- \delta)-A)) &  \text{if $A\subseteq [\tht^*-\delta ,\tht^*]$}
\end{cases}
\]
for any measurable $A\subseteq\bbR$. 
That is, we obtain $\beta^1$ from $\beta^0$ by reflecting $\beta^0|_{[\tht^*-2\delta,\tht^*-\delta )}$ across $\tht^*-\delta$ onto $(\tht^*-\delta,\tht^*]$.  In particular, $\beta^1$ is supported on $[\tht^*-\delta ,\tht^*+2\delta]$ and both measures have total mass $\beta(I)$.
We now let 
\[
G^j(z) : = g(z) + \frac{1}{\beta(I)}\int_{I} h(|e^{i\tht}-z|)\, d\beta^j(\tht),
\]
and want to show that
\beq\lb{3.3}
\int_{H} f(z) G^0(z)^{\alpha} dz \leq  \int_{H} f(z)G^1(z)^{\alpha} dz.
\eeq

Let $\tilde H:=\{re^{i\phi}\in H\,|\, \phi\in[\tht^*-\delta-\pi, \tht^*-\delta] \}$ and let $H':=\{re^{i(2(\tht^*-\delta)-\phi)} \,|\, re^{i\phi}\in \tilde H\}$ be its reflection across the line connecting 0 and $e^{i(\tht^*-\delta)}$.  The properties of $H$ ensure that $H'\subseteq H$.  If now $z\in H\setminus \tilde H$, then $|e^{i\tht}-z|\ge |e^{i(2(\tht^*-\delta)-\tht )}-z|$
for any $\tht \in[\tht^*-2\delta,\tht^*-\delta)$.  This and $h$ being non-increasing show that $G^0(z)\le G^1(z)$ for all $z\in H\setminus \tilde H$, and in particular for all  $z\in H\setminus (\tilde H\cup H')$.  To conclude \eqref{3.3}, it hence suffices to show that
\beq\lb{3.4}
f(z)G^0(z)^{\alpha}+ f(z')G^0(z')^{\alpha} \leq f(z)G^1(z)^{\alpha} + f(z')G^1(z')^{\alpha}
\eeq
holds for any $z=re^{i\phi}\in\tilde H$, with $z':=re^{i(2(\tht^*-\delta)-\phi)}\in H'$ its reflection across the line connecting 0 and $e^{i(\tht^*-\delta)}$.

Note that the properties of $f$ and $g$ show that $f(z')\ge f(z)$ and $g(z')\ge g(z)$.  Let
\begin{align*}
b_+ &:= g(z) +  \frac{1}{\beta(I)} \int_{[\tht^*-\delta,\tht^*+2\delta]} h(|e^{i\tht}-z|) \, d\beta^0(\tht) \qquad(\ge 0),
\\ b_- &:= \frac{1}{\beta(I)} \int_{[\tht^*-2\delta,\tht^*-\delta)} h(|e^{i\tht}-z|) \, d\beta^0(\tht) \qquad(\ge 0),
\\ b'_+ &:= g(z') +  \frac{1}{\beta(I)}\int_{[\tht^*-\delta,\tht^*+2\delta]} h(|e^{i\tht}-z'|) \, d\beta^0(\tht) \qquad(\ge 0),
\\ b'_- &:= \frac{1}{\beta(I)}\int_{[\tht^*-2\delta,\tht^*-\delta)} h(|e^{i\tht}-z'|) \, d\beta^0(\tht) \qquad(\ge 0).
\end{align*} 
Then $G^0(z)=b_++b_-$, $G^0(z')=b'_++b'_-$, $G^1(z)=b_++b'_-$, and $G^1(z')=b'_++b_-$, so 
\[
G^0(z)+G^0(z')=G^1(z)+G^1(z').
\]
We also have $b_+'\ge b_+$ and $b'_-\le b_-$ due to $g(z')\ge g(z)$, $h$ being non-increasing, and the definition of $z'$.  This implies
\[
0\le G^1(z)\le \min\{G^0(z),G^0(z')\}\le \max\{G^0(z),G^0(z')\} \le G^1(z').
\]
The last two relations, 
together with convexity of the function $x^\alpha$ on $[0,\infty)$, now yield
\[
G^0(z)^{\alpha}+ G^0(z')^{\alpha} \leq G^1(z)^{\alpha} + G^1(z')^{\alpha}.
\]
From this and  $(f(z')- f(z))(G^1(z')^\alpha-G^0(z')^\alpha)\ge 0$ we obtain \eqref{3.4}, and therefore \eqref{3.3}.

An identical (modulo reflection) argument shows that if  $\beta^2$ is obtained from $\beta^1$  by reflecting $\beta^1|_{(\tht^*+\delta,\tht^*+2\delta ]}$  across $\tht^*+\delta$ onto $[\tht^*,\tht^*+\delta)$, then we have
\[
\int_{H} f(z) G^1(z)^{\alpha} dz \leq  \int_{H} f(z)G^2(z)^{\alpha} dz.
\]
We can then repeat this with $\frac\delta 2$ in place of $\delta$ because $\beta^2$ is supported on $[\tht^*-\delta,\tht^*+\delta]$ and has total mass $\beta(I)$.  Continuing in this way, we obtain a sequence of measures $\beta^0,\beta^2,\beta^4,\dots$, each $\beta^{2j}$ having total mass $\beta(I)$ and supported on $[\tht^*-2^{1-j}\delta,\tht^*+2^{1-j}\delta]$, such that
\[
\int_{H} f(z) G^{2j}(z)^{\alpha} dz \leq  \int_{H} f(z)G^{2(j+1)}(z)^{\alpha} dz
\]	
for $j=0,1,\dots$.	
Since 
the integrands are uniformly bounded and converge pointwise to $f(z) (g(z) + h(|e^{i\tht^*}-z|) )^{\alpha}$ as $j\to\infty$, Dominated Convergence Theorem finishes the proof.


\begin{thebibliography}{99}

\bibitem{Bardos}
C.~Bardos,
\it Existence et unicit\'e de la solution de l'\'equation d'{E}uler en dimension deux,
\rm J. Math. Anal. Appl. {\bf 40} (1972), 769--790.

\bibitem{BDT}
C.~Bardos, F.~Di~Plinio, and R.~Temam,
\it The {E}uler equations in planar nonsmooth convex domains,
\rm J. Math. Anal. Appl. {\bf 407} (2013), 69--89.

\bibitem{BonGus}
P.~Bonicatto and N.~A.~Gusev,
\it Superposition principle for the continuity equation in a bounded domain,
\rm Journal of Physics: Conference Series {\bf 990} (2018), 1--12.


\bibitem{Boyer}
F.~Boyer,
\it Trace theorems and spatial continuity properties for the solutions of the transport equation,
\rm Differential Integral Equations, {\bf 18} (2005), 891--934.

\bibitem{Delort}
J.-M. Delort,
\it Existence de nappes de tourbillon en dimension deux,
\rm J. Amer. Math. Soc. {\bf 4} (1991), 553--586.

\bibitem{DT}
F.~Di~Plinio and R.~Temam,
\it Grisvard's {S}hift {T}heorem {N}ear {$L^\infty$} and {Y}udovich {T}heory on {P}olygonal {D}omains,
\rm SIAM J. Math. Anal. {\bf 47} (2015), 159--178.


\bibitem{DiPernaMajda}
R.~J. DiPerna and A.~J. Majda,
\it Concentrations in regularizations for {$2$}-{D} incompressible flow,
\rm Comm. Pure Appl. Math. {\bf 40} (1987), 301--345.

\bibitem{GV-Lac}
D.~G{\'e}rard-Varet and C.~Lacave,
\it The {T}wo-{D}imensional {E}uler {E}quations on {S}ingular {D}omains,
\rm Arch. Ration. Mech. Anal. {\bf 209} (2013), 131--170.

\bibitem{GV-Lac2}
D.~G{\'e}rard-Varet and C.~Lacave,
\it The {T}wo {D}imensional {E}uler {E}quations on {S}ingular {E}xterior {D}omains,
\rm Arch. Ration. Mech. Anal. {\bf 218} (2015), 1609--1631.

\bibitem{Hold}
E.~H\" older,
\it \" Uber unbeschr\" ankte Fortsetzbarkeit einer stetigen ebenen Bewegung in einer unbegrentzten inkompressiblen Fl\" ussigkeit (German),
\rm Math.~Z. {\bf 37} (1933), 727--738.

\bibitem{JK}
D.~Jerison and C.~E. Kenig,
\it The inhomogeneous {D}irichlet problem in {L}ipschitz domains,
\rm J. Funct. Anal. {\bf 130} (1995), 161--219.

\bibitem{Kikuchi}
K.~Kikuchi,
\it Exterior problem for the two-dimensional Euler equation,
\rm J. Fac. Sci. Univ. Tokyo Sect. IA Math. {\bf 30} (1983), 3490--3494.

\bibitem{KisZla}
A.~Kiselev and A.~Zlato\v{s},
\it Blow up for the 2{D} {E}uler equation on some bounded domains,
\rm J. Differential Equations {\bf 259} (2015), 3490--3494.

\bibitem{Lacave-SIAM}
C.~Lacave,
\it Uniqueness for two-dimensional incompressible ideal flow on singular domains,
\rm SIAM J. Math. Anal. {\bf 47} (2015), 1615--1664.

\bibitem{LacMio}
C.~Lacave and E.~Miot,
\it Uniqueness for the vortex-wave system when the vorticity is constant  near the point vortex,
\rm SIAM J. Math. Anal. {\bf 41} (2009), 1138--1163

\bibitem{LMW}
C.~Lacave, E.~Miot, and C.~Wang,
\it Uniqueness for the two-dimensional {E}uler equations on domains with corners,
\rm Indiana Univ. Math. J. {\bf 63} (2014), 1725--1756.

\bibitem{LacZla}
C. Lacave and A. Zlato\v s,
\it The Euler equations in planar domains with corners,
\rm Arch. Ration. Mech. Anal. {\bf 234} (2019), 57--79. 


\bibitem{Lesley}
F.D. Lesley,
\it Conformal mappings of domains satisfying a wedge condition,
\rm Proc. Amer. Math. Soc. {\bf 93} (1985), 483--488.

\bibitem{MB} A.~Majda and A.~Bertozzi, 
\it Vorticity and Incompressible Flow, 
\rm Cambridge University Press, 2002.

\bibitem{MarPul-VWS}
C.~Marchioro and M.~Pulvirenti,
\it On the vortex-wave system,
\rm In M.~Francaviglia, editor, {\it Mechanics, Analysis and Geometry:
  200 Years After Lagrange}, North-Holland Delta Series, 79--95,
  Elsevier, Amsterdam, 1991.

\bibitem{MP}  
C. Marchioro and M. Pulvirenti, 
\it Mathematical Theory of Incompressible Nonviscous Fluids, 
\rm Applied Mathematical Sciences {\bf 96}, Springer-Verlag, New York, 1994.

\bibitem{McGrath} 
F.~J.~McGrath, 
\it Nonstationary plane flow of viscous and ideal fluids, 
\rm Arch. Ration. Mech. Anal. {\bf 27} (1967), 329--348.


\bibitem{Pomm} 
C.~Pommerenke,
\it Boundary behavior of conformal maps, volume 299 of Grundlehren der Mathematischen Wissenschaften [Fundamental Principles of Mathematical Sciences],
\rm Springer-Verlag, Berlin, 1992.

\bibitem{Taylor}
M.~E. Taylor,
\it Incompressible fluid flows on rough domains,
\rm In Semigroups of operators: theory and applications ({N}ewport
  {B}each, {CA}, 1998), volume~42 of {\em Progr. Nonlinear Differential
  Equations Appl.}, 320--334, Birkh\"auser, Basel, 2000.


\bibitem{Temam}
R.~Temam,
\it On the {E}uler equations of incompressible perfect fluids,
\rm J. Functional Analysis {\bf 20} (1975), 32--43.

\bibitem{Vis1}
M.~Vishik,
\it Instability and non-uniqueness in the {C}auchy problem for the {E}uler equations of an ideal incompressible fluid. Part I,
\rm preprint.

\bibitem{Vis2}
M.~Vishik,
\it Instability and non-uniqueness in the {C}auchy problem for the {E}uler equations of an ideal incompressible fluid. Part II,
\rm preprint.

\bibitem{War}
S.~E.~Warschawski and G.E. Schober,
\it On conformal mapping of certain classes of Jordan domains,
\rm Arch. Ration. Mech. Anal. {\bf 22} (1966), 201--209.

\bibitem{Wol}
W.~Wolibner,
\it Un theor\` eme sur l'existence du mouvement plan d'un fluide parfait, homog\` ene, incompressible,
pendant un temps infiniment long (French),
\rm Mat.~Z. {\bf 37} (1933), 698--726.

\bibitem{Yudth} V.~I.~Yudovich, 
\it Non-stationary flows of an ideal incompressible fluid, 
\rm Zh. Vych. Mat. {\bf 3} (1963), 1032--1066.



\end{thebibliography}
\end{document}